\newcommand {\red} {{\mathrm{red}}}
\DeclareMathOperator{\Spec}{Spec}
\DeclareMathOperator{\Image}{Image}
\DeclareMathOperator{\Supp}{Supp}
\DeclareMathOperator{\coker}{Coker}
\DeclareMathOperator{\Bs}{Bs}
\DeclareMathOperator{\SBs}{SBs}
\DeclareMathOperator{\DB}{{B_{-}}}
\DeclareMathOperator{\AB}{{B_{+}}}
\DeclareMathOperator{\id}{id}
\newcommand {\OO} {{\mathcal O}}
\newcommand {\CC} {{\mathbb C}}
\newcommand {\Z} {{\mathbb Z}}
\newcommand {\ZZ} {{\mathbb Z}}
\numberwithin{equation}{section}
\theoremstyle{plain}%
  \newtheorem{theorem}[equation]{Theorem}
  \newtheorem{corollary}[equation]{Corollary}
  \newtheorem{proposition}[equation]{Proposition}
  \newtheorem{lemma}[equation]{Lemma}%
\theoremstyle{remark}
\newtheorem{remark}[equation]{Remark}
\theoremstyle{definition}
\newtheorem{definition}[equation]{Definition}
\newtheorem{example}[equation]{Example}
\begin{document}

\title{Arithmetically nef line bundles}

\author{Dennis Keeler}
\address{ Department of Mathematics \\ Miami University \\ Oxford, OH 45056   }
\email{keelerds@miamioh.edu}
\urladdr{http://www.users.miamioh.edu/keelerds}

\begin{abstract}
Let $L$ be a line bundle on a scheme $X$, proper over a field.
The property of $L$ being nef can sometimes be ``thickened,'' allowing
reductions to positive characteristic. We call such line bundles 
\emph{arithmetically nef}. It is known that a line bundle $L$
may be nef, but not arithmetically nef. We show that $L$ is
arithmetically nef if and only if its restriction to its stable base locus
is arithmetically nef.
Consequently, if $L$ is nef and its stable base locus 
has dimension $1$ or less, then $L$ is arithmetically nef.
\end{abstract}

\maketitle

\section{Introduction}

Algebro-geometric theorems over fields of characteristic zero can sometimes
be reduced to theorems over positive characteristic fields.
Perhaps most famously, the Kodaira Vanishing Theorem can be proved 
in this manner, as in \cite[Theorem~6.10]{Illusie}. The main idea
of the reduction is to replace the base field $k$ with 
a finitely generated $\ZZ$-subalgebra $R$ ``sufficiently close'' to $k$.
Objects such as schemes, morphisms, and sheaves are replaced with models
defined over $R$. This process is sometimes called ``arithmetic thickening.''
Some properties of the original objects will be inherited by their
thickened versions, such as ampleness of a line bundle.

However, nefness is not such a property. Langer gave an example of a nef
line bundle that does not have a nef thickening \cite[Section~8]{Langer1}.
Thus on a scheme $X$ proper over a field, we call a line bundle $L$
\emph{arithmetically nef} if $L$ has a nef thickening.
(See \eqref{def:arithmetically-nef} for the exact definition.) 

Arithmetic nefness of a line bundle
was studied briefly in \cite{ArapuraAppendix}, where
it was shown, in characteristic zero,
that $L$ is arithmetically nef if and only if $L$ is
$F$-semipositive (a cohomological vanishing condition). In this paper,
we carry out a more thorough examination, with basic properties 
proven in Section~\ref{sec:arith-nef}. 

We review the stable
base locus in Section~\ref{sec:stable-base-locus}, generalizing the concept
to the case of a Noetherian scheme. We then prove the following
in Section~\ref{sec:main-results}.

\begin{theorem}\label{thm:arith-nef-on-stable-base-locus}
Let $X$ be a proper scheme over a field $k$ with line bundle $L$.
Let $\SBs(L)$ be the stable base locus of $L$.
Then $L$ is arithmetically nef if and only if $L\vert_{\SBs(L)}$ is arithmetically nef.
\end{theorem}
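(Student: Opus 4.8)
The plan is to reduce both implications to a single construction: spread $X$ and $L$ out over a finitely generated $\Z$-subalgebra $R\subseteq k$, and realize a model of $\SBs(L)$ inside $X_R$ as the \emph{relative} base locus of a sufficiently divisible power of $L_R$. Being defined from $L_R$ itself, this model is automatically compatible with the chosen model of $(X,L)$. The second ingredient is a characteristic-free observation that I would record first: for a line bundle $M$ on a scheme $Y$ proper over any field, $M$ is nef if and only if $M\vert_{\SBs(M)}$ is nef. Indeed, let $C\subseteq Y$ be an integral curve; if $C\subseteq\SBs(M)$ then $M\cdot C$ is computed after restriction to $\SBs(M)$, while if $C\not\subseteq\SBs(M)=\bigcap_{m\ge1}\Bs(|M^{m}|)$ then $C\not\subseteq\Bs(|M^{m}|)$ for some $m$, so $M^{m}\vert_C$ has a nonzero global section, which gives an effective divisor on the proper curve $C$ and hence $M\cdot C\ge0$. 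The scheme structure put on $\SBs(M)$ is immaterial here, since nefness only sees integral curves.

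Now fix $m_0\ge1$ with $\SBs(L)=\Bs(|L^{m_0}|)=:Z$, which exists by Section~\ref{sec:stable-base-locus}. Choose a finitely generated $\Z$-subalgebra $R\subseteq k$ over which $X$, $L$, and (for the converse) a nef thickening of $L\vert_Z$ all spread out, and shrink $\Spec R$ so that the $R$-module $H^0(X_R,L_R^{m_0})$ is free and its formation commutes with arbitrary base change. Let $W_R\subseteq X_R$ be the relative base locus, i.e.\ $W_R=\Supp\coker\big(H^0(X_R,L_R^{m_0})\otimes_R\OO_{X_R}\to L_R^{m_0}\big)$, with the scheme structure cut out by the image ideal. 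Since cokernels and, after shrinking, $H^0(X_R,L_R^{m_0})$ both commute with base change, Nakayama's lemma gives $W_R\cap X_s=\Bs(|L_s^{m_0}|)$ for every closed point $s\in\Spec R$ (write $W_s:=W_R\cap X_s$), and likewise $W_R\times_R k=Z$. Thus $(W_R,L_R\vert_{W_R})$ is a model of $(Z,L\vert_Z)$, and by the basic spreading-out facts of Section~\ref{sec:arith-nef} (uniqueness of models and stability of nef thickenings under enlarging $R$) I may assume, after enlarging $R$, that $L_R\vert_{W_R}$ coincides with any prescribed nef thickening of $L\vert_Z$.

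Both directions then follow. If $L$ is arithmetically nef, take $(X_R,L_R)$ to be nef on the fibers over a dense open $U\subseteq\Spec R$; for $s\in U$ the bundle $L_s\vert_{W_s}$ is a restriction of the nef bundle $L_s$, hence nef, so $L_R\vert_{W_R}$ is a nef thickening of $L\vert_{\SBs(L)}$. Conversely, if $L\vert_{\SBs(L)}$ is arithmetically nef, arrange that $L_R\vert_{W_R}$ is nef on the fibers over a dense open $U$; for $s\in U$ we then have $\SBs(L_s)\subseteq\Bs(|L_s^{m_0}|)=W_s$, so $L_s\vert_{\SBs(L_s)}$ is a restriction of the nef bundle $L_s\vert_{W_s}$ and is therefore nef, whence $L_s$ is nef by the observation of the first paragraph; thus $(X_R,L_R)$ is a nef thickening of $(X,L)$.

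The main obstacle, as I see it, is the behavior of the stable base locus under reduction modulo $p$: one cannot expect $\SBs(L_s)=\SBs(L)_s$, since the stable base locus can jump under specialization. The construction above avoids this by needing only the one-sided inclusion $\SBs(L_s)\subseteq\Bs(|L_s^{m_0}|)$, which follows formally from base change for $H^0(X_R,L_R^{m_0})$ together with Nakayama's lemma. The remaining delicate point is purely spreading-out bookkeeping: checking that the abstract nef thickening of $L\vert_{\SBs(L)}$ supplied by the hypothesis can, after enlarging $R$, be identified with the restriction of $L_R$ to the relative base locus $W_R$ in a spread-out family — equivalently, that the two resulting models of $(\SBs(L),L\vert_{\SBs(L)})$ agree over a dense open of $\Spec R$.
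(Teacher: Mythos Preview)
Your proof is correct and follows essentially the same strategy as the paper: thicken a presentation of the base locus so that on each closed fiber $X_s$ one has $\SBs(L_s)\subseteq W_s$, then apply the fiberwise fact that nefness is detected on the stable base locus (the paper's Theorem~\ref{thm:nef-on-stable-base-locus}). The only technical difference is that the paper, having defined $\Bs(L)$ via an arbitrary finite presentation $\bigoplus_{i=1}^m \OO_X\to L\to F\to 0$ on a Noetherian scheme, thickens that sequence directly and so bypasses your appeal to base change for $H^0$; your inline argument for the ``nef iff nef on $\SBs$'' lemma is also slightly more direct than the paper's, which routes through semiampleness via \cite{Fujita}.
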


As a corollary, we show that if $L$ is nef and $\dim \SBs(L) \leq 1$,
then $L$ is arithmetically nef (Corollary~\ref{cor:small-stable-base-locus}).
We also show that if $L$ is arithmetically nef,
then any numerically equivalent line bundle is as well
 (Corollary~\ref{cor:arith-nef-numerically-equivalent}).
 
We end in Section~\ref{sec:counterexamples} by reviewing counterexamples of Langer
of line bundles $L$ that are nef, but not arithmetically nef. 
We verify that his positive characteristic counterexample does fail 
our definition of arithmetically nef.


Throughout we work over an arbitrary field $k$ unless otherwise specified.
While the results are of most interest when $k$ has characteristic zero,
this hypothesis will not be necessary for our proofs.

\section{Basic properties of arithmetic nefness}\label{sec:arith-nef}

We must first review \emph{arithmetic thickening}. 
The idea is to approximate a field $k$ by its subalgebras $R_\alpha$ which are
finitely generated over $\Z$. Objects over $\Spec k$, such as finite type schemes,
morphisms, and coherent modules, are replaced with ``thickened'' versions
over $\Spec R_\alpha$. As $k$ is the direct limit 
(that is, a special case of colimit) of the $R_\alpha$,
these thickened objects share many properties with the originals.
When $R_\alpha$ is thickening $k$, we use the subscript $\alpha$
on other associated objects. Note that the $R_\alpha$ form
a directed system, since if $R_\alpha \cup R_\beta \subseteq k$, then
there exists $R_\gamma$ such that $R_\alpha \cup R_\beta \subseteq R_\gamma \subseteq k$.
When $R_\alpha \subseteq R_\beta$, we write $\alpha \leq \beta$.

The basic theory is covered in \cite[Section~6]{Illusie},
with more detail in \cite[$\mathrm{IV}_3$, Section~8]{ega}
and some proofs in \cite[Tag 00QL]{stacks-project}.
(These techniques work more generally in the case of finite presentation,
but since we are working with Noetherian rings, this is the same as finite type.)

Since localizing at $0 \neq f \in R_\alpha$ yields $(R_\alpha)_f$ with
$R_\alpha \subseteq (R_\alpha)_f \subset k$, we can also replace
$\Spec R_\alpha$ with appropriate basic open subsets.
The appendices of \cite{Gortz} contain a large list (with references) of 
properties that hold on the fibers over open subsets of $\Spec R_\alpha$.

Let $S_\alpha=\Spec R_\alpha$. We will always choose $R_\alpha$ large enough so that 
$f_\alpha:X_\alpha \to S_\alpha$ is proper \cite[$\mathrm{IV}_3$, 8.10.5]{ega},
and hence closed. Since the inclusion $R_\alpha \hookrightarrow k$
factors through the fraction field of $R_\alpha$, the generic point
of $S_\alpha$
must be in the image of $f_\alpha$. Hence $f_\alpha$ is surjective.

Some authors also require that $X_\alpha \to S_\alpha$ be flat.
This can always be accomplished for large enough $\alpha$
\cite[$\mathrm{IV}_3$, 11.2.6]{ega}.
We can also guarantee that $L_\alpha$ 
is invertible \cite[$\mathrm{IV}_3$, 8.5.5]{ega}.

The following lemma summarizes some properties that can be preserved in a thickening.
Since many of our proofs reference \cite{ega} or \cite{hartshorne2}, 
we note that their definitions of projective morphism
coincide when the target has an ample line bundle \cite[Tag 01W9, 087S]{stacks-project},
and this will always be the case for this paper.

\begin{lemma}\label{lem:thickened-smooth}
Let $X$ be an integral scheme, 
smooth and projective over an algebraically closed field $k$.
Then there exists a thickening $(X_\alpha, R_\alpha)$ such that for all
$\beta \geq \alpha$, we have
\begin{enumerate}
\item\label{thick1} $f_\beta: X_\beta \to \Spec R_\beta$ is smooth and projective,
\item\label{thick2} For every $s \in \Spec R_\beta$, the fiber $X_s$ is geometrically
integral over the residue field $k(s)$ and all fibers have the same dimension,
\item\label{thick3} For every $s \in \Spec R_\beta$, the induced map $f_s: X_s \to \Spec k(s)$
is smooth and projective.
\end{enumerate}
\end{lemma}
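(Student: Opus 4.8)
The plan is the familiar ``spreading out'' argument: first produce a proper thickening, then pass to a basic open subset of $\Spec R_\alpha$ over which the structure morphism acquires each of the listed properties, and finally observe that all of these properties survive further base change. To begin, take any thickening $(X_\alpha,R_\alpha)$ with $f_\alpha$ proper, as arranged before the lemma. After enlarging $R_\alpha$, a fixed closed immersion $X\hookrightarrow\PP^n_k$ spreads out to a closed immersion $X_\alpha\hookrightarrow\PP^n_{R_\alpha}$ over $R_\alpha$ \cite[$\mathrm{IV}_3$, 8.10.5]{ega}, so $f_\alpha$ is projective, and by \cite[$\mathrm{IV}_3$, 11.2.6]{ega} we may also shrink $\Spec R_\alpha$ so that $f_\alpha$ is flat. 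Since $R_\alpha$ is a domain, $S_\alpha:=\Spec R_\alpha$ is integral; write $\eta$ for its generic point and $K:=k(\eta)=\operatorname{Frac}(R_\alpha)$, so that $X=X_\eta\times_K k$.

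Next I would check that the generic fiber $X_\eta$ is already smooth, projective and geometrically integral over $K$. Projectivity is inherited from $f_\alpha$. Since $\Spec k\to\Spec K$ is faithfully flat and smoothness is fppf-local on the base, $X=X_\eta\times_K k$ being smooth over $k$ forces $X_\eta$ smooth over $K$. Geometric integrality of a finite-type $K$-scheme may be tested after base change to any algebraically closed extension of $K$; as $k$ is such an extension and $X=X_\eta\times_K k$ is integral, $X_\eta$ is geometrically integral over $K$.

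Now I would shrink the base. The non-smooth locus of $f_\alpha$ is closed in $X_\alpha$, so by properness its image in $S_\alpha$ is closed, and by the previous paragraph together with flatness it omits $\eta$; delete it. When $f_\alpha$ is flat, proper and of finite presentation, the set of $s\in S_\alpha$ with $X_s$ geometrically integral over $k(s)$ is constructible and stable under generization, hence contains an open neighborhood of $\eta$ --- see \cite[$\mathrm{IV}_3$, 9.7.7, 12.2.4]{ega} or the appendices of \cite{Gortz}. Intersecting these two open subsets and replacing $R_\alpha$ by a localization $(R_\alpha)_g$ (still finitely generated over $\Z$), we obtain $f_\alpha$ smooth, projective and flat with geometrically integral fibers, which is \eqref{thick1} and the first part of \eqref{thick2}. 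At this stage $X_\alpha$ is flat over the integral scheme $S_\alpha$ with irreducible generic fiber, hence every generic point of $X_\alpha$ lies over $\eta$ and $X_\alpha$ is irreducible; the dimension formula for flat morphisms \cite[$\mathrm{IV}_2$, 6.1.2]{ega} then yields $\dim X_s=\dim X_\alpha-\dim S_\alpha$ for every $s$, completing \eqref{thick2}.

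Finally, for $\beta\geq\alpha$ one may take $X_\beta=X_\alpha\times_{S_\alpha}S_\beta$ (this holds for large $\beta$, and the models can be chosen compatibly). Smoothness, projectivity, flatness, geometric integrality of fibers, and fiber dimension are all preserved by the base change $S_\beta\to S_\alpha$, which gives \eqref{thick1} and \eqref{thick2} for $f_\beta$; then \eqref{thick3} is automatic, since for $s\in S_\beta$ the fiber $f_s:X_s\to\Spec k(s)$ is a base change of $f_\beta$, hence smooth and projective, with $X_s$ geometrically integral by \eqref{thick2}. Nothing here is deep: the substance is in locating the correct spreading-out statements and in checking that the finitely many shrinkings combine into one basic open. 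The main point calling for care is the descent step of the second paragraph --- that $X_\eta/K$ inherits smoothness and geometric integrality from $X/k$ --- together with the bookkeeping observation that for large $\beta$ the model $X_\beta$ really is the base change $X_\alpha\times_{S_\alpha}S_\beta$, so that stability under base change applies verbatim.
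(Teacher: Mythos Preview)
Your proof is correct and follows essentially the same spreading-out strategy as the paper: thicken, then shrink $\Spec R_\alpha$ to obtain smoothness, projectivity, and geometrically integral fibers of constant dimension, and finally invoke stability under base change for $\beta\geq\alpha$. The only differences are organizational: where the paper cites the omnibus spreading-out results \cite[$\mathrm{IV}_{4}$, 17.7.8]{ega} and \cite[$\mathrm{IV}_3$, 12.2.1]{ega} directly for smoothness and for geometric integrality with constant fiber dimension, you instead (i) descend smoothness to the generic fiber via faithful flatness of $\Spec k\to\Spec K$ and then remove the closed image of the non-smooth locus, and (ii) obtain constant fiber dimension from irreducibility of $X_\alpha$ together with the flat dimension formula rather than from \cite[$\mathrm{IV}_3$, 12.2.1]{ega}. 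Both routes are standard; yours is a bit more explicit, the paper's a bit more economical.
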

\begin{proof}
Choose a thickening $(X_\alpha, R_\alpha)$ with 
$f_\alpha:X_\alpha \to S_\alpha=\Spec R_\alpha$
smooth and projective \cite[$\mathrm{IV}_{4,3}$, 17.7.8, 8.10.5]{ega}.
Smooth and projective are stable under base change \cite[Tag 01VB, 01WF]{stacks-project},
which gives us claims \eqref{thick1} and \eqref{thick3}.

Let $k_\alpha$ be the fraction field of $R_\alpha$. Note $k_\alpha \subseteq k$.
 Then
$X_\alpha \times_{R_\alpha} k_\alpha$ is geometrically integral
by definition, since taking the fiber product with $\Spec k$
yields the integral scheme $X$.

Since $f_\alpha$ is proper and flat, 
we can replace $R_\alpha$ with a localization by a non-zero element
and assume that for every $s \in \Spec R_\beta$, the fibers $X_s$ 
of ${f_\alpha}$ are geometrically integral,
 of the same dimension 
\cite[$\mathrm{IV}_3$, 12.2.1]{ega}. 

Now if $R_\alpha \subseteq R_\beta$ and $s' \in \Spec R_\beta$, then
$k(s')$ is a field extension of $k(s)$ for some $s \in \Spec R_\alpha$.
Thus the fiber of $f_\beta$ over $s'$ is still geometrically integral.
The base change of $\Spec k(s') \to \Spec k(s)$ also 
preserves the dimension of the fiber \cite[Tag 02FY]{stacks-project}.
Hence we have \eqref{thick2}.
\end{proof}


We also must review the concept of relative nefness.
For a review of nefness and intersection theory in general,
see  the seminal paper \cite{kleiman} (working
over an algebraically closed field) or \cite[Chapter~VI.2]{kollar}
(working over an arbitrary field).
	
Let $S$ be a noetherian scheme, let
$f\colon X \to S$ be a proper morphism, and let $L$ be a line bundle on $X$.
For each $s \in S$, let $L_s$ be the restriction of $L$ to the fiber $X_s$.
Recall that $L$ is \emph{$f$-nef} if $L_s$ is nef for every closed
$s \in S$ (see, for instance, \cite[Definition~2.9]{keeler}).
If $S$ is affine, then the property of $L$ being $f$-nef does not
depend on $f$, so we may simply say that $L$ is nef \cite[Proposition~2.15]{keeler}.

We now define the main concept of arithmetically nef. While
the concept is most useful in characteristic $0$, there is no harm in 
using an arbitrary field.
(In \cite[Definition~3.12]{ArapuraAppendix}, when $k$ had positive characteristic, 
the line bundle $L$ was defined to be arithmetically nef if $L$ was nef,
but this was for convenience.)  
We do not assume that $k$ is algebraically closed.
	
\begin{definition}\label{def:arithmetically-nef}
Let $X$ be a proper scheme over a field $k$, and let $L$ be a line bundle on $X$.
Then $L$ is \emph{arithmetically nef} if there exists a thickening
$(X_\alpha \to \Spec R_\alpha, L_\alpha)$ such that $L_\alpha$ is nef.
\end{definition}

\begin{remark}\label{rem:char-p}
We have not insisted that the field $k$ have characteristic $0$, though
this is usually the case when applying arithmetic thickening.
When $k$ is algebraic over a finite field, an arithmetic thickening
just yields a subfield $R_\alpha$. Indeed, $R_\alpha \subseteq k$
will be algebraic over $\ZZ/p\ZZ$ and hence $R_\alpha$
is a field 
\cite[{Tag 00GS}]{stacks-project}. In this case, nef and arithmetically nef
are equivalent. On the other hand, Example~\ref{ex:char-p}
reviews a nef, but not arithmetically nef, line bundle when
the base field is $\overline{\mathbb{F}}_2(t)$.
\end{remark}

\begin{remark}\label{rem:once-nef-always-nef}
The property of $L_\alpha$ being nef is stable under base change
of $R_\alpha$ \cite[Lemma~2.18]{keeler}.
Thus if $L$ is arithmetically nef, then $L_\alpha$ will be nef
on every fiber of a certain thickening \cite[Lemma~2.18]{keeler}. In particular,
if $L$ is arithmetically nef, then $L$ is nef.

Further, if $L_\alpha$ is nef, then $L_\beta$ is nef for any $\beta \geq \alpha$.
That is, once one thickening works, all subsequent thickenings will also work.
\end{remark}

Like nefness, the property of being arithmetically nef behaves well under
pullbacks. If $i:Y \to X$ is a closed immersion, we write $L\vert_Y= i^*L$.

\begin{lemma}\label{lem:arith-nef-surjective}
Let $L$ be a line bundle on a proper scheme $X$ over a field $k$, and
let $f\colon X' \to X$ be a proper morphism. 
\begin{enumerate}
\item If $L$ is arithmetically nef, then $f^*L$ is arithmetically nef, and
\item if $f$ is surjective and $f^*L$ is arithmetically nef, then
$L$ is arithmetically nef.
\end{enumerate}
In particular, if $L$ is arithmetically nef, then $L\vert_Y$ is arithmetically nef
for any closed subscheme $Y \subseteq X$.
\end{lemma}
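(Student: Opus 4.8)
The strategy for both parts is to reduce, via arithmetic thickening, to the analogous statements about nefness on the fibers of a model over $\Spec R_\alpha$; those fibers are proper schemes over a field, where the behavior of nefness under proper morphisms is classical (see \cite{kleiman} or \cite[Chapter~VI.2]{kollar}). Concretely, I will use the following two facts over a field $k(s)$: if $g\colon W' \to W$ is a proper morphism of proper $k(s)$-schemes and $M$ is a nef line bundle on $W$, then $g^*M$ is nef; and if moreover $g$ is surjective and $g^*M$ is nef, then $M$ is nef. Both follow by testing against integral curves. The passage between a statement over $k$ and the corresponding statements over the residue fields $k(s)$ is governed by Remark~\ref{rem:once-nef-always-nef} (once a thickening is nef, every later one is) together with \cite[Proposition~2.15]{keeler} (over an affine base, $f$-nef and nef coincide).

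For (1), I would choose a thickening $(X_\alpha \to \Spec R_\alpha, L_\alpha)$ with $L_\alpha$ nef. Enlarging $R_\alpha$ to some $R_\beta$ and using the spreading-out results of \cite[$\mathrm{IV}_3$, \S8]{ega}, one may also descend $f$ to a proper morphism $f_\beta\colon X'_\beta \to X_\beta$ over $S_\beta = \Spec R_\beta$ so that $f_\beta^* L_\beta$ is a model of $f^*L$; by Remark~\ref{rem:once-nef-always-nef}, $L_\beta$ is still nef. For each closed point $s \in S_\beta$, the induced map $(f_\beta)_s\colon X'_s \to X_s$ is proper and $(L_\beta)_s$ is nef, so $(f_\beta^*L_\beta)_s = (f_\beta)_s^*(L_\beta)_s$ is nef by the first fact above. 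Hence $f_\beta^* L_\beta$ is $f_\beta$-nef, and so nef, and $f^*L$ is arithmetically nef.

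For (2), I must be careful: we are handed \emph{some} nef thickening of $f^*L$, not necessarily one of the form $f_\alpha^*L_\alpha$. So I would first spread out the whole diagram $X' \xrightarrow{f} X \to \Spec k$ together with $L$: for large $\alpha$ this produces $S_\alpha$-schemes $X_\alpha, X'_\alpha$, a proper morphism $f_\alpha\colon X'_\alpha \to X_\alpha$, and a line bundle $L_\alpha$ on $X_\alpha$, with $f_\alpha^*L_\alpha$ a model of $f^*L$; since $f$ is surjective, after shrinking $S_\alpha$ (i.e.\ localizing $R_\alpha$ inside $k$) we may take $f_\alpha$ surjective \cite[$\mathrm{IV}_3$, 8.10.5]{ega}. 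Because any two models of $f^*L$ become isomorphic after a further enlargement, and because of Remark~\ref{rem:once-nef-always-nef}, we may arrange simultaneously that $f_\alpha^* L_\alpha$ is nef. Now fix a closed point $s \in S_\alpha$: a short diagram chase using that the structure map $X'_\alpha \to S_\alpha$ factors through $f_\alpha$ shows that $(f_\alpha)_s\colon X'_s \to X_s$ is surjective; it is also proper, and $(f_\alpha)_s^*(L_\alpha)_s = (f_\alpha^*L_\alpha)_s$ is nef, so by the second fact above $(L_\alpha)_s$ is nef. Hence $L_\alpha$ is $f_\alpha$-nef, and so nef, and $L$ is arithmetically nef. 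Finally, the ``in particular'' clause is the special case of (1) with $f$ the (proper) closed immersion $Y \hookrightarrow X$. The main obstacle is precisely the bookkeeping in (2): reconciling the given nef model of $f^*L$ with one arising from a compatible model of the entire diagram, while also shrinking $S_\alpha$ to make $f_\alpha$ surjective — all of which is forced through using the directedness of the system of thickenings and Remark~\ref{rem:once-nef-always-nef}.
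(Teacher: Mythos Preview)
Your proof is correct and follows essentially the same route as the paper: spread out the morphism $f$ and the line bundle $L$ simultaneously so that $f_\alpha$ is proper (and surjective when $f$ is), then use that relative nefness is preserved and reflected under proper (respectively, proper surjective) morphisms. The paper compresses your fiber-by-fiber argument into a single citation of \cite[Lemma~2.17]{keeler}, and handles part~(2) with the phrase ``by a similar argument''; your version unpacks the bookkeeping (reconciling the given nef model of $f^*L$ with one of the form $f_\alpha^*L_\alpha$ via directedness and Remark~\ref{rem:once-nef-always-nef}) more explicitly, but the underlying strategy is identical.
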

\begin{proof}
Choose a thickening
$(f_\alpha\colon X'_\alpha \to X_\alpha, L_\alpha)$ such
that $f_\alpha$ is proper and also surjective if $f$ is surjective
\cite[$\mathrm{IV}_3$, 8.10.5]{ega}. If $L$ is arithmetically nef, then upon
choosing a larger $\alpha$,
we may assume $L_\alpha$ is nef,
and so ${f}_\alpha^* {L}_\alpha$ is nef \cite[Lemma~2.17]{keeler}.
Hence $f^*L$ is arithmetically nef. Now if $f^*L$ is arithmetically nef,
then $L$ is arithmetically nef by a similar argument \cite[loc. cit.]{keeler}.
\end{proof}

Similar to ampleness, the concept of arithmetically nef depends only on 
the reduced, irreducible components.

\begin{lemma}\label{lem:irreducible-components}
Let $L$ be a line bundle on a proper scheme $X$ over a field $k$. 
Let $X_i$, $i=1, \dots, n$ be the irreducible components of $X$
and $L_i$ be the restriction of $L$ to $X_i$. Then
\begin{enumerate}
\item $L$ is arithmetically nef if and only if $L_{\red}$
(the restriction of $L$ to $X_{\red}$) is arithmetically nef.
\item $L$ is arithmetically nef if and only if $L_i$ 
is arithmetically nef for all $i$.
\end{enumerate}
\end{lemma}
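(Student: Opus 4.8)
The plan is to reduce everything to the surjectivity statement in Lemma~\ref{lem:arith-nef-surjective}. For part (1), observe that the closed immersion $X_{\red}\hookrightarrow X$ is a surjective proper morphism, since $X$ and $X_{\red}$ have the same underlying topological space. Therefore Lemma~\ref{lem:arith-nef-surjective}(2) gives that arithmetic nefness of $L_{\red}$ implies arithmetic nefness of $L$, while Lemma~\ref{lem:arith-nef-surjective}(1) (or the final sentence of that lemma) gives the converse. Thus (1) is essentially immediate, and by invoking it we may assume $X=X_{\red}$ when proving (2).

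For part (2), one direction is the special case of Lemma~\ref{lem:arith-nef-surjective} asserting that arithmetic nefness restricts to closed subschemes: if $L$ is arithmetically nef, then $L_i = L\vert_{X_i}$ is arithmetically nef for each $i$. For the reverse direction, consider the disjoint union $\widetilde{X} = \coprod_{i=1}^n X_i$ together with the natural morphism $\pi\colon \widetilde{X}\to X$; since $X = \bigcup X_i$ (as $X$ is reduced, by part (1) we may assume this), $\pi$ is surjective, and it is proper because each $X_i\hookrightarrow X$ is a closed immersion and there are finitely many of them. The pullback $\pi^*L$ is the line bundle on $\widetilde{X}$ that restricts to $L_i$ on each component $X_i$. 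A thickening of $\widetilde{X}$ may be taken componentwise: choosing $\alpha$ large enough that each $(X_{i,\alpha}, L_{i,\alpha})$ is defined with $L_{i,\alpha}$ nef (possible by Remark~\ref{rem:once-nef-always-nef}, passing to a common $\alpha$ dominating all $n$ of them), we set $\widetilde{X}_\alpha = \coprod_i X_{i,\alpha}$ over a common $\Spec R_\alpha$. Then $\pi^*L$ has a thickening whose fibers are disjoint unions of the fibers of the $L_{i,\alpha}$, hence nef; so $\pi^*L$ is arithmetically nef. Applying Lemma~\ref{lem:arith-nef-surjective}(2) to the surjective proper morphism $\pi$ yields that $L$ is arithmetically nef.

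The only mild subtlety — and the one point worth spelling out carefully — is the bookkeeping for the common thickening: one must ensure that the finitely many models $X_{i,\alpha}$ can be arranged over a single affine base $\Spec R_\alpha$, and that nefness of each $L_{i,\alpha}$ persists when passing to a common index, which is exactly the content of the directed-system remark in Section~\ref{sec:arith-nef} together with the ``once nef, always nef'' statement of Remark~\ref{rem:once-nef-always-nef}. I do not anticipate any genuine obstacle beyond this; everything else is a direct application of Lemma~\ref{lem:arith-nef-surjective}.
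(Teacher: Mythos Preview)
Your proof is correct and follows exactly the paper's approach: both use the surjective closed immersion $X_{\red}\hookrightarrow X$ for part~(1) and the natural surjection $\coprod_i X_i \to X$ for part~(2), reducing everything to Lemma~\ref{lem:arith-nef-surjective}. The paper's proof is terser and leaves the ``common thickening'' bookkeeping for $\pi^*L$ implicit, whereas you spell it out, but there is no substantive difference.
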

\begin{proof}
The first claim follows immediately from $i: X_{\red} \to X$
and Lemma~\ref{lem:arith-nef-surjective}. 
For the second claim, we may assume $X$ is reduced and use the
natural surjection $f: \coprod_i {X_i} \to X$
from the disjoint union of the $X_i$ to $X$.
\end{proof}

Arithmetically nef is preserved by base change, allowing reduction to
the case of an algebraically closed field.
We use a form of faithfully flat descent for one direction
of the proof.

\begin{lemma}\label{lem:arith-nef-base-change}
Let $X$ be proper over a field $k$, let $k'$ be a field extension of $k$,
and let $L$ be a line bundle on $X$. Then $L$ is arithmetically nef
on $X$ if and only if $L \otimes_k k'$ is arithmetically nef
on $X \times_k k'$.
\end{lemma}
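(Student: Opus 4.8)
For the ``only if'' direction there is nothing to descend. If $(X_\alpha\to\Spec R_\alpha, L_\alpha)$ is a thickening of $(X,L)$ with $R_\alpha\subseteq k$ and $L_\alpha$ nef, then, viewing $R_\alpha$ as a finitely generated $\Z$-subalgebra of $k'$ via $k\hookrightarrow k'$, the morphism $\Spec k'\to\Spec R_\alpha$ factors through $\Spec k$, so $X_\alpha\times_{R_\alpha}k'\cong X\times_k k'$ and $L_\alpha\otimes_{R_\alpha}k'\cong L\otimes_k k'$. Thus the same data is a thickening of $(X\times_k k', L\otimes_k k')$ whose line bundle is already nef, and so $L\otimes_k k'$ is arithmetically nef.

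Conversely, assume $L\otimes_k k'$ is arithmetically nef. The plan is to manufacture a thickening of $(X,L)$ over a subalgebra of $k$ on which the line bundle is nef. First I would spread out $(X,L)$ over $k$ in the standard way (\cite[$\mathrm{IV}_3$, 8.10.5, 8.5.5]{ega}): choose a finitely generated $\Z$-subalgebra $R\subseteq k$, a proper $\mathcal{X}\to\Spec R$, and an invertible sheaf $\mathcal{L}$ on $\mathcal{X}$ restricting to $(X,L)$; it then suffices to show that, after shrinking $\Spec R$ to a basic open subset still contained in $k$, the sheaf $\mathcal{L}$ is nef. On the other side, the hypothesis furnishes a thickening $(\mathcal{Y}\to\Spec B, \mathcal{M})$ of $(X\times_k k', L\otimes_k k')$ with $B\subseteq k'$ finitely generated over $\Z$, with $\mathcal{M}$ nef, and with an isomorphism $\mathcal{Y}\times_B k'\cong X\times_k k'$ carrying $\mathcal{M}$ to $L\otimes_k k'$.

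Next I would compare the two thickenings over a common base. Pick a finitely generated $\Z$-subalgebra $T\subseteq k'$ containing both $R$ and $B$, and base change each model to $\Spec T$. The proper $T$-schemes $\mathcal{X}\times_R T$ and $\mathcal{Y}\times_B T$ both become $X\times_k k'$, compatibly with their line bundles, after the further base change $\Spec k'\to\Spec T$, and $\mathcal{M}\times_B T$ is still nef by \cite[Lemma~2.18]{keeler}. Writing $k'$ as the filtered colimit of its finitely generated $T$-subalgebras and invoking the limit formalism of \cite[$\mathrm{IV}_3$, Section~8]{ega} (the ``form of faithfully flat descent''), the $k'$-isomorphism between these two $T$-schemes, together with the identification of line bundles, is already defined over some such subalgebra $T'\supseteq T$. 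Enlarging $T$ to $T'$, we conclude that $\mathcal{L}\times_R T$ is isomorphic to the nef line bundle $\mathcal{M}\times_B T$, hence is nef on every closed fiber of $\mathcal{X}\times_R T\to\Spec T$. This comparison — transporting the nef line bundle across two a priori unrelated arithmetic thickenings by working over a common $\Z$-subalgebra of $k'$ — is the main obstacle.

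It remains to push nefness down from $\Spec T$ to $\Spec R$. The morphism $\Spec T\to\Spec R$ is of finite type and dominant, so after replacing $R$ by a suitable localization $R_g\subseteq k$ it becomes surjective; since $R$ and $T$ are finitely generated over $\Z$ they are Jacobson, so closed points map to closed points and have finite residue fields. For a closed point $\mathfrak{p}\in\Spec R$, a closed point $t\in\Spec T$ above it yields a fiber of $\mathcal{X}\times_R T$ that is the base change to $k(t)$ of the fiber $\mathcal{X}_{\mathfrak{p}}$, whence $\mathcal{L}\vert_{\mathcal{X}_{\mathfrak{p}}}\otimes_{k(\mathfrak{p})}k(t)$ is nef; as nefness is insensitive to extension of the base field \cite[Chapter~VI.2]{kollar}, $\mathcal{L}\vert_{\mathcal{X}_{\mathfrak{p}}}$ is nef. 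Therefore $\mathcal{L}$ is nef on the thickening $\mathcal{X}\times_R R_g\to\Spec R_g$ of $(X,L)$, so $L$ is arithmetically nef. This last paragraph is routine bookkeeping with Jacobson schemes and base change of nefness; the real content is the comparison step above.
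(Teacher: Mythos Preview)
Your argument is correct, and the overall architecture matches the paper's: align the given nef thickening over a subring of $k'$ with a thickening that actually comes from a subring of $k$, then push nefness down along $\Spec T\to\Spec R$. The two executions diverge in how each half is carried out. For the comparison step, the paper invokes the uniqueness statement of \cite[Proposition~6.2]{Illusie} together with Remark~\ref{rem:once-nef-always-nef} to say directly that the nef thickening may be taken of the form $(X_\alpha\times_{R_\alpha}R_\beta,\,R_\beta,\,L_\alpha\otimes_{R_\alpha}R_\beta)$ with $R_\alpha\subseteq R_\beta\subseteq k'$; you instead make this explicit by the EGA limit formalism, spreading the $k'$-isomorphism of the two models down to a finitely generated $T'$. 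These are the same idea in different packaging. For the descent step, the paper applies Generic Freeness to make $\Spec(R_\beta)_f\to\Spec(R_\alpha)_f$ faithfully flat and then quotes \cite[Lemma~2.18]{keeler} once; you instead use Chevalley to arrange surjectivity of $\Spec T\to\Spec R_g$ and then argue fiberwise, using that closed points of a finite-type $\Z$-scheme are exactly those with finite residue field to lift each closed $\mathfrak p\in\Spec R_g$ to a closed $t\in\Spec T$. Your route avoids flatness entirely but costs a short Jacobson argument; the paper's route is a one-line citation once freeness is in hand. (A small terminological point: the phrase ``form of faithfully flat descent'' in the paper refers to the Generic Freeness step, not to the limit formalism you labeled with it.)
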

\begin{proof}
Let $({X}_\alpha, R_\alpha, {L}_\alpha)$ be an arithmetic thickening 
of $(X, k, L)$ with ${L}_\alpha$ invertible.

Suppose $L$ is arithmetically nef. So we may assume that ${L}_\alpha$
is nef on ${X}_\alpha$. Since $R_\alpha \subseteq k \subseteq k'$,
 we have natural isomorphisms
\[
{X}_\alpha \times_{R_\alpha} k' 
\cong ({X}_\alpha \times_{R_\alpha} k) \times_k k' \cong X \times_k k'.
\]
We see that $({X}_\alpha, R_\alpha, {L}_\alpha)$ is also an
arithmetic thickening of  $(X \times_k k', k', L \otimes_k k')$.
Hence $L \otimes_k k'$ is arithmetically nef.

Now suppose $L \otimes_k k'$ is arithmetically nef. 
Due to Remark~\ref{rem:once-nef-always-nef}
and the uniqueness in \cite[Proposition~6.2]{Illusie},
we can choose a finitely generated $\Z$-algebra $R_\beta$ such that
$R_\alpha \subseteq R_\beta \subseteq k'$ and $({X}_\alpha \times_{R_\alpha} R_\beta,
R_\beta, {L}_\alpha \otimes_{R_\alpha} R_\beta)$ is an arithmetic thickening
of $(X\times_k k', k', L \otimes_k k')$, where
${L}_\alpha \otimes_{R_\alpha} R_\beta$ is nef.

Since $R_\alpha \subseteq R_\beta$ and both $R_\alpha, R_\beta$ are finite type over $\Z$,
we have $R_\beta$ of finite type over $R_\alpha$.
By Generic Freeness \cite[Tag 051R]{stacks-project}, we
can choose $f \in R_\alpha$ such that $(R_\beta)_f$ is a free $(R_\alpha)_f$-module.
Thus the map $\Spec (R_\beta)_f \to \Spec (R_\alpha)_f$ is flat and surjective.
Note that ${L} \otimes_{R_\alpha} (R_\beta)_f$ is still nef
and thus ${L}_\alpha \otimes_{R_\alpha} (R_\alpha)_f$ is nef
\cite[Lemma~2.18]{keeler}.
Since $({X}_\alpha \otimes_{R_\alpha} (R_\alpha)_f, 
(R_\alpha)_f, {L}_\alpha \otimes_{R_\alpha} (R_\alpha)_f)$
is an arithmetic thickening of $(X, k, L)$, we
have that $L$ is arithmetically nef.
\end{proof}

Due to the ability to change fields, we can check arithmetic nefness by 
just checking nefness over open subsets of a fixed base $\Spec R_\alpha$.

\begin{corollary}\label{cor:open-sets}
Let $X$ be proper over a field $k$ with line bundle $L$.
Let $(X_\alpha, R_\alpha, L_\alpha)$ be an arithmetic thickening
such that $L_\alpha$ is invertible and $X_\alpha \to \Spec R_\alpha$
is proper. Then $L$ is arithmetically nef if and only if
there exists $0 \neq f \in R_\alpha$ such that $L_\beta$ is nef,
where $R_\beta = (R_\alpha)_f$. 
\end{corollary}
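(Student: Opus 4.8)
The plan is to deduce this directly from the machinery already set up, essentially recycling the second half of the proof of Lemma~\ref{lem:arith-nef-base-change} with $k' = k$.

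First I would dispatch the easy implication. If such an $f$ exists, then since $R_\alpha \subseteq (R_\alpha)_f \subseteq k$, the triple $(X_\beta, R_\beta, L_\beta)$ with $R_\beta = (R_\alpha)_f$ is itself an arithmetic thickening of $(X, k, L)$, and $L_\beta$ is nef by hypothesis, so $L$ is arithmetically nef by Definition~\ref{def:arithmetically-nef}.

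For the converse, suppose $L$ is arithmetically nef, so there is some thickening $(X_\gamma, R_\gamma, L_\gamma)$ with $L_\gamma$ nef. I would use that the finitely generated $\Z$-subalgebras of $k$ form a directed system to choose $R_\delta$ with $R_\alpha, R_\gamma \subseteq R_\delta \subseteq k$, and appeal to the uniqueness of thickenings \cite[Proposition~6.2]{Illusie} (together with Remark~\ref{rem:once-nef-always-nef}) to identify, after enlarging $\delta$, the base changes to $R_\delta$ of the two thickenings over $R_\alpha$ and over $R_\gamma$. Since nefness is stable under base change \cite[Lemma~2.18]{keeler}, this makes $L_\alpha \otimes_{R_\alpha} R_\delta$ nef. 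Now $R_\delta$ is of finite type over $R_\alpha$, so Generic Freeness \cite[Tag 051R]{stacks-project} yields $0 \neq f \in R_\alpha$ with $\Spec (R_\delta)_f \to \Spec (R_\alpha)_f$ faithfully flat; as $L_\alpha \otimes_{R_\alpha} (R_\delta)_f$ is still nef, faithfully flat descent for nefness \cite[Lemma~2.18]{keeler} gives that $L_\alpha \otimes_{R_\alpha} (R_\alpha)_f$ is nef, which is exactly the assertion with $R_\beta = (R_\alpha)_f$.

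The only step needing care is the identification of the two thickenings over $R_\delta$ via the uniqueness statement: one must keep straight that here ``thickening'' refers to the fixed model $X_\alpha \to \Spec R_\alpha$ base-changed along $R_\alpha \hookrightarrow R_\delta$, and check that the enlargement of $\delta$ does not spoil nefness, which is guaranteed by Remark~\ref{rem:once-nef-always-nef}. Everything else is the same generic-freeness-plus-descent bookkeeping already carried out in the proof of Lemma~\ref{lem:arith-nef-base-change}.
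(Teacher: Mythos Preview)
Your argument is correct. Both directions go through as you describe, and the care you flag about identifying the two thickenings over $R_\delta$ via \cite[Proposition~6.2]{Illusie} (possibly after enlarging $\delta$) is exactly the right point to be careful about.

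The paper's proof takes a slightly different, somewhat cleaner route for the nontrivial direction. Rather than re-running the Generic Freeness and faithfully-flat-descent argument inside $k$, it passes to the fraction field $k_\alpha$ of $R_\alpha$: since $L \cong (L_\alpha \otimes_{R_\alpha} k_\alpha) \otimes_{k_\alpha} k$, Lemma~\ref{lem:arith-nef-base-change} (used now as a black box) gives that $L_\alpha \otimes_{R_\alpha} k_\alpha$ is arithmetically nef over $k_\alpha$. Then one takes a nef thickening over some finitely generated $\Z$-algebra $R_\beta$ with $R_\alpha \subseteq R_\beta \subseteq k_\alpha$, and observes that any such $R_\beta$ is contained in a localization $(R_\alpha)_f$, because $k_\alpha$ is the fraction field of $R_\alpha$. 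This sidesteps the explicit appeal to Generic Freeness and descent, trading it for the elementary fact about subalgebras of a fraction field. Your approach has the virtue of being more self-contained and showing exactly where the work happens; the paper's has the virtue of not repeating machinery already encapsulated in Lemma~\ref{lem:arith-nef-base-change}.
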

\begin{proof} Of course if $L_\beta$ is nef, then $L$ is arithmetically
nef by definition. 

So suppose $L$ is arithmetically nef. Let $k_\alpha$ be the fraction field
of $R_\alpha$. Since $R_\alpha \subseteq k$, we have $k_\alpha \subseteq k$.
Let $M_\alpha = L_\alpha \otimes_{R_\alpha} k_\alpha$.
By Lemma~\ref{lem:arith-nef-base-change}, we have that
$M_\alpha$ is arithmetically nef because
$L \cong M_\alpha \otimes_{k_\alpha} k$ is arithmetically nef.

By definition, there exists a finite type $\Z$-algebra $R_\beta \subseteq k_\alpha$ 
such that there is a nef thickening of $M_\alpha$ over $R_\beta$. 
By Remark~\ref{rem:once-nef-always-nef}, we can choose $R_\beta \supseteq R_\alpha$,
and so $R_\beta$ is finitely generated over $R_\alpha$ and
$R_\alpha \subseteq R_\beta \subseteq k_\alpha$. 
Since $k_\alpha$ is the fraction field of $R_\alpha$, we must have 
$R_\beta = (R_\alpha)_f$ for some non-zero $f \in R_\alpha$.
\end{proof}

The above corollary makes it easier to show that a line bundle is \emph{not}
arithmetically nef. It cannot occur that $R_\alpha$ farther from $k$
does not give nefness, while a closer approximation does. 
For example, we have the following.

\begin{corollary}\label{cor:non-arith-nef}
Let $X$ be proper over a field $k$ with line bundle $L$.
Let $(X_\alpha, R_\alpha, L_\alpha)$ be an arithmetic thickening
such that $L_\alpha$ is invertible and $X_\alpha \to \Spec R_\alpha$
is proper.

Suppose that for all closed $s \in \Spec R_\alpha$, 
the line bundle $L_{\alpha,s}$ is not nef on the fiber $X_{\alpha,s}$.
Then $L$ is not arithmetically nef.
\end{corollary}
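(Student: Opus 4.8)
The plan is to argue by contradiction, reducing immediately to Corollary~\ref{cor:open-sets}. Suppose $L$ were arithmetically nef. Since $L_\alpha$ is invertible and $X_\alpha \to \Spec R_\alpha$ is proper, Corollary~\ref{cor:open-sets} supplies a nonzero $f \in R_\alpha$ such that $L_\beta$ is nef, where $R_\beta = (R_\alpha)_f$ and $X_\beta \to \Spec R_\beta$, $L_\beta$ are the base changes of $X_\alpha$, $L_\alpha$ along the open immersion $\Spec R_\beta \hookrightarrow \Spec R_\alpha$. Because $R_\alpha \subseteq k$ is a domain and $f \neq 0$, the ring $R_\beta$ is nonzero, so $\Spec R_\beta$ is a nonempty open subscheme of $\Spec R_\alpha$; in particular, for every $s \in \Spec R_\beta$ the fiber $X_{\beta,s}$ and the restriction $L_{\beta,s}$ coincide with $X_{\alpha,s}$ and $L_{\alpha,s}$.

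Next I would produce a closed point of $\Spec R_\alpha$ lying inside this open set. Since $R_\alpha$ is a finitely generated $\mathbb{Z}$-algebra, $\Spec R_\alpha$ is a Jacobson scheme, and the closed points of the open subscheme $\Spec R_\beta$ are precisely the closed points of $\Spec R_\alpha$ that lie in it \cite[Tag 01P1]{stacks-project}. As $\Spec R_\beta$ is nonempty it has at least one closed point $s$, and this $s$ is then closed in $\Spec R_\alpha$.

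Finally, since $\Spec R_\beta$ is affine, $L_\beta$ being nef means exactly that $L_{\beta,s'}$ is nef on $X_{\beta,s'}$ for every closed point $s' \in \Spec R_\beta$ (\cite[Definition~2.9, Proposition~2.15]{keeler}). Applying this to the point $s$ found above and using $L_{\beta,s} = L_{\alpha,s}$, we conclude that $L_{\alpha,s}$ is nef on $X_{\alpha,s}$ for a closed point $s \in \Spec R_\alpha$, contradicting the hypothesis. Hence $L$ is not arithmetically nef.

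The substantive content here is entirely carried by Corollary~\ref{cor:open-sets}; the only point that needs care is the passage from ``nef on some basic open subscheme of $\Spec R_\alpha$'' to ``nef on the fiber over a closed point of $\Spec R_\alpha$,'' which is where the Jacobson property of finite-type $\mathbb{Z}$-algebras enters. I do not expect any genuine obstacle beyond this bookkeeping.
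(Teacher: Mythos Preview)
Your proof is correct and follows essentially the same approach as the paper: both reduce to Corollary~\ref{cor:open-sets} and invoke the Jacobson property of finite-type $\mathbb{Z}$-algebras to produce a closed point of $\Spec R_\alpha$ inside any nonempty basic open subset. The paper phrases this contrapositively rather than by contradiction, but the argument is the same.
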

\begin{proof}
Let $S=\Spec R_\alpha$.
The ring $R_\alpha$ is Jacobson
because it is finitely generated over $\Z$ \cite[Tag 00GC]{stacks-project}.
Thus $S$ is a Jacobson space and the closed points of any open subset $U$
are closed in $S$
\cite[Tag 00G3, 005X]{stacks-project}.
So $L_\alpha\vert_U$ cannot be relatively nef.
Thus $L$ is not arithmetically nef by Corollary~\ref{cor:open-sets}.
\end{proof}

We end the section with a few more simple observations.
Being arithmetically nef also behaves well under tensor product.

\begin{lemma}\label{lem:arith-nef-tensor}
Let $X$ be a proper scheme over a field $k$ with line bundles $L,M$. Then
\begin{enumerate}
\item $L$ is arithmetically nef, if and only if $L^n$ is arithmetically nef for
all $n > 0$, if and only if $L^n$ is arithmetically nef for some $n > 0$, and
\item If $L$ and $M$ are arithmetically nef, then $L \otimes M$ is arithmetically nef.
\end{enumerate}
\end{lemma}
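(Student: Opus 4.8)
The plan is to fix a single arithmetic thickening carrying all the line bundles in sight, reduce each assertion to a statement about line bundles on the individual closed fibers, and there invoke the elementary facts about nef line bundles on proper schemes over a field (see \cite{kleiman} or \cite[Chapter~VI.2]{kollar}), namely that the nef classes form a cone: a nonnegative multiple of a nef class is nef, $L$ is nef if and only if $L^{\otimes n}$ is nef for some $n>0$, and a sum of nef classes is nef.

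For part (1), I would start with a thickening $(X_\alpha \to \Spec R_\alpha, L_\alpha)$ with $L_\alpha$ invertible and $X_\alpha \to \Spec R_\alpha$ proper; after enlarging $\alpha$ we may assume $L^n$ is thickened by $L_\alpha^{\otimes n}$. The implication ``arithmetically nef for all $n$'' $\Rightarrow$ ``for some $n$'' is trivial. If $L$ is arithmetically nef, pass to a thickening with $L_\alpha$ nef (Remark~\ref{rem:once-nef-always-nef}); then for every closed $s \in \Spec R_\alpha$ the restriction $L_{\alpha,s}$ is nef on the fiber $X_{\alpha,s}$, hence so is $L_{\alpha,s}^{\otimes n} = (L^n)_{\alpha,s}$, so $L_\alpha^{\otimes n}$ is nef (the base being affine) and $L^n$ is arithmetically nef. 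Conversely, if $L^n$ is arithmetically nef for some $n>0$, choose a thickening with $L_\alpha^{\otimes n}$ nef; on each closed fiber, nefness of $L_{\alpha,s}^{\otimes n}$ forces nefness of $L_{\alpha,s}$, so $L_\alpha$ is nef and $L$ is arithmetically nef.

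For part (2), I would choose a common thickening $(X_\alpha \to \Spec R_\alpha, L_\alpha, M_\alpha)$ with $L_\alpha$ and $M_\alpha$ invertible, $(L\otimes M)_\alpha \cong L_\alpha\otimes M_\alpha$, and the structure morphism proper. Since $L$ and $M$ are arithmetically nef and ``once a thickening works, all later ones work'' (Remark~\ref{rem:once-nef-always-nef}), the directedness of the system of thickenings lets me enlarge $\alpha$ so that $L_\alpha$ and $M_\alpha$ are simultaneously nef. Then for every closed $s\in\Spec R_\alpha$ both $L_{\alpha,s}$ and $M_{\alpha,s}$ are nef on $X_{\alpha,s}$, hence $L_{\alpha,s}\otimes M_{\alpha,s} = (L\otimes M)_{\alpha,s}$ is nef, so $L_\alpha\otimes M_\alpha$ is nef and $L\otimes M$ is arithmetically nef.

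The only real subtlety is the bookkeeping: one needs a single thickening on which all the relevant line bundles are defined, compatibly with tensor operations, and on which the finitely many given line bundles are all nef at once. This is handled by the compatibility of thickening with tensor products (after enlarging $\alpha$) together with Remark~\ref{rem:once-nef-always-nef} and the directedness of the indexing system; alternatively one could phrase the descent to a convenient base using Corollary~\ref{cor:open-sets}. Everything else is the statement that the nef cone is stable under nonnegative scaling and addition.
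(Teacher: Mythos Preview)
Your argument is correct and is exactly what the paper intends: the paper's own proof is the single sentence ``These statements follow immediately from the definition of nef,'' and your write-up is simply a careful unpacking of that sentence (common thickening via directedness and Remark~\ref{rem:once-nef-always-nef}, then the cone property of nefness on each closed fiber). There is no substantive difference in approach.
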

\begin{proof} These statements follow immediately from the definition of nef.
\end{proof}

In the projective case, arithmetically nef has a strong connection to ample.

\begin{proposition}
Let $X$ be a projective scheme over a field $k$, let $H$ be an ample
line bundle, and let $L$ be a line bundle. Then $L$ is arithmetically nef
if and only if there exists an arithmetic thickening ${X}_\alpha \to \Spec R_\alpha$
such that ${H}_\alpha \otimes {L}_\alpha^n$ is ample for all $n > 0$.
\end{proposition}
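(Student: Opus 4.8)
The plan is to leverage the classical characterization of nefness over a field: if $X$ is projective over a field and $H$ is ample, then a line bundle $L$ is nef if and only if $H \otimes L^n$ is ample for all $n > 0$ (this follows from Kleiman's theorem that the nef cone is the closure of the ample cone; see \cite{kleiman} or \cite[Chapter~VI.2]{kollar}). I would then transport this equivalence across an arithmetic thickening fiber by fiber, using the fact that relative ampleness of a line bundle on a proper morphism can be tested on fibers \cite[$\mathrm{IV}_3$, 9.6.4]{ega}.

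For the ``if'' direction, suppose a thickening $(X_\alpha \to \Spec R_\alpha, H_\alpha, L_\alpha)$ is given with $H_\alpha \otimes L_\alpha^n$ relatively ample for every $n > 0$. Restricting to a fiber $X_s$ over a closed point $s$, the line bundle $H_s \otimes L_s^n$ is ample on $X_s$ for all $n > 0$, so for every integral curve $C \subseteq X_s$ we have $(H_s \cdot C) + n\,(L_s \cdot C) > 0$ for all $n \geq 1$; dividing by $n$ and letting $n \to \infty$ gives $L_s \cdot C \geq 0$. Hence $L_s$ is nef for every closed $s$, so $L_\alpha$ is $f_\alpha$-nef and $L$ is arithmetically nef.

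For the ``only if'' direction, assume $L$ is arithmetically nef. By Remark~\ref{rem:once-nef-always-nef} I would fix a thickening $(X_\alpha \to \Spec R_\alpha, L_\alpha)$ with $f_\alpha$ proper (and, since $X$ is projective over $k$, projective) such that $L_s$ is nef on $X_s$ for \emph{every} $s \in \Spec R_\alpha$, and after enlarging $\alpha$ I would spread $H$ out to a relatively ample $H_\alpha$ \cite[$\mathrm{IV}_3$, 8.10.5]{ega}, so that $H_s$ is ample on each $X_s$. Then on the projective $k(s)$-scheme $X_s$ we have $H_s$ ample and $L_s$ nef, so by the classical fact recalled above (valid over the arbitrary field $k(s)$) the line bundle $H_s \otimes L_s^n$ is ample on $X_s$ for all $n > 0$. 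Consequently, for each fixed $n > 0$, the line bundle $H_\alpha \otimes L_\alpha^n$ is ample on every fiber of the proper morphism $f_\alpha$, hence is $f_\alpha$-ample by the fiberwise criterion for relative ampleness \cite[$\mathrm{IV}_3$, 9.6.4]{ega}, and this produces the required thickening.

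The step needing the most care is the ``only if'' direction, where ``ampleness for all $n$'' must be obtained \emph{uniformly} over the base. I handle this by first arranging, via Remark~\ref{rem:once-nef-always-nef}, nefness of $L_\alpha$ on all fibers (not merely the closed ones) together with ampleness of $H_\alpha$ on all fibers, and only then invoking the fiberwise characterization of relative ampleness once for each $n$; no further shrinking of $\Spec R_\alpha$ is required, and the ``for all $n$'' quantifier causes no difficulty because it is applied only after passing to fibers.
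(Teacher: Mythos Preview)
Your argument is correct and follows the same strategy as the paper: choose a thickening with $f_\alpha$ projective, $H_\alpha$ relatively ample, and $L_\alpha$ invertible, then use the relative characterization ``$L_\alpha$ is $f_\alpha$-nef if and only if $H_\alpha \otimes L_\alpha^n$ is $f_\alpha$-ample for all $n>0$,'' which the paper cites directly as \cite[Proposition~2.14]{keeler} while you unpack it fiber by fiber via Kleiman's criterion and the fiberwise test for relative ampleness. One minor slip: the reference for spreading out the ampleness of $H$ should be \cite[$\mathrm{IV}_3$, 9.6.4]{ega} (or \cite[$\mathrm{III}_1$, 4.7.1]{ega}), not 8.10.5.
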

\begin{proof} Choose an arithmetic thickening $f:{X}_\alpha \to \Spec R_\alpha$
such that $f$ is projective \cite[$\mathrm{IV}_3$, 8.10.5]{ega}, 
${H}_\alpha$ is an ample line bundle \cite[$\mathrm{IV}_3$, 9.6.4]{ega},
and ${L}_\alpha$ is a line bundle \cite[$\mathrm{IV}_3$, 9.4.7]{ega}.
The result then follows from \cite[Proposition~2.14]{keeler}.
\end{proof}

\section{Stable base locus}\label{sec:stable-base-locus}

We now consider some basic properties of the \emph{stable base locus}
of a line bundle $L$ on a scheme $X$. While the concept is usually defined when 
$X$ is proper over a field, it will be useful to consider any Noetherian scheme.
Hence, we generalize the definition via the following proposition.
We take supports of coherent sheaves to have their reduced induced structure.

\begin{proposition}\label{prop:base-locus-exists}
Let $X$ be a Noetherian scheme with line bundle $L$. Consider exact sequences
of the form
\[
\oplus_{i=1}^n \OO_X {\to} L \to F \to 0,
\]
for some $n \geq 1$.
Then there exists a reduced closed subscheme $Z$ of $X$
such that $Z$ is the minimum support for any such cokernel $F$.
That is, $Z = \Supp(\coker u)$ for some 
 $u: \oplus_{i=1}^n \OO_X \to L$, and for any $v: \oplus_{i=1}^m \OO_X \to L$
 (with $m \geq 1$),
we have $Z \subseteq \Supp(\coker v)$.
\end{proposition}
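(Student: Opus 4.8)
The plan is to reformulate the statement in terms of quasi-coherent ideal sheaves and then feed in the Noetherian hypothesis; this is essentially the classical construction of the base locus of a linear system, adapted to an arbitrary Noetherian $X$. Given a map $u\colon \oplus_{i=1}^n \OO_X \to L$, tensoring with $L^{-1}$ yields a map $\oplus_{i=1}^n \OO_X \to \OO_X$ whose image is a quasi-coherent sheaf of ideals $\calI_u \subseteq \OO_X$. Since tensoring by the invertible sheaf $L^{-1}$ is exact, $(\coker u)\otimes L^{-1} \cong \OO_X/\calI_u$, so $\Supp(\coker u) = \Supp(\OO_X/\calI_u)$ is the closed subset $V(\calI_u)$, equipped with its reduced structure by our convention. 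Hence it is enough to produce a single map $u_0$ (with some number of summands) for which $\calI_v \subseteq \calI_{u_0}$ for every $v\colon \oplus_{i=1}^m \OO_X \to L$ with $m \geq 1$, because then $V(\calI_{u_0}) \subseteq V(\calI_v)$, and we may take $Z = \Supp(\coker u_0)$.

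Next I would record two elementary closure properties of the family $\Sigma = \{\calI_u : u\colon \oplus_{i=1}^n \OO_X \to L,\ n \geq 1\}$ of quasi-coherent ideals of $\OO_X$. First, $\Sigma$ is nonempty: the zero map $\OO_X \to L$ contributes $0 \in \Sigma$. Second, $\Sigma$ is stable under finite sums: given $u\colon \oplus^n\OO_X \to L$ and $v\colon \oplus^m\OO_X \to L$, concatenating components gives a map $u\oplus v\colon \oplus^{n+m}\OO_X \to L$ with $\calI_{u\oplus v} = \calI_u + \calI_v$.

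The key input is that on a Noetherian scheme the quasi-coherent sheaves of ideals satisfy the ascending chain condition. This is local on $X$: on an affine open $\Spec A$ it is just the ACC for ideals of the Noetherian ring $A$, and $X$ is covered by finitely many such opens. Consequently the nonempty family $\Sigma$ has a maximal element $\calI_{u_0}$. For any $v\colon \oplus^m\OO_X \to L$, the ideal $\calI_{u_0} + \calI_v$ again lies in $\Sigma$ and contains $\calI_{u_0}$, so maximality forces $\calI_{u_0} + \calI_v = \calI_{u_0}$, i.e.\ $\calI_v \subseteq \calI_{u_0}$. Setting $Z = \Supp(\coker u_0) = V(\calI_{u_0})$ then gives the desired minimum support, and $Z$ is reduced by convention.

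I do not anticipate a genuine obstacle. The only points that require care are the reduction of ACC for quasi-coherent ideal sheaves to ACC for ideals in a Noetherian ring (used to extract a maximal element of $\Sigma$, since an arbitrary directed family need not contain its supremum a priori), and the bookkeeping identity $\calI_{u\oplus v} = \calI_u + \calI_v$, which is exactly what turns "maximal in $\Sigma$" into "an upper bound for all of $\Sigma$."
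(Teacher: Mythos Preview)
Your proof is correct and essentially the same as the paper's: both tensor by $L^{-1}$ to pass to ideal sheaves, use the concatenation $u\oplus v$ to realize $\calI_u+\calI_v$ (equivalently $\Supp(\coker u)\cap\Supp(\coker v)$) as again lying in the family, and invoke the Noetherian hypothesis for an extremal element. The only cosmetic difference is that the paper phrases this via the descending chain condition on closed supports (finding a minimal element and showing it is unique), whereas you phrase it via the ascending chain condition on ideal sheaves (finding a maximal element and showing it dominates all others).
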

\begin{proof}
Note that all sheaves involved are coherent, and hence have closed support
\cite[Corollary~7.31]{Gortz}.
Let 
\[
S = \{ Y \mid \exists n \geq 1 \text{ and } u:\oplus_{i=1}^n \OO_X \to L \text{ such that } 
Y = \Supp(\coker(u)) \}.
\] 
Since $X$ is Noetherian, $S$ has minimal elements.

Let $Z_1, Z_2$ be two minimal elements of $S$. So there exists $n_i \geq 1$
and $u_i: \oplus_{i=1}^{n_i} \OO_X \to L$
with $Z_i = \Supp(\coker(u_i))$. Define $v: \oplus_{i=1}^{n_1 + n_2} \OO_X \to L$
by $v(a_1 \oplus a_2) = u_1(a_1) + u_2(a_2)$ for $a_i  \in \oplus_{i=1}^{n_i} \OO_X$.

Support will not change when tensoring all maps by $L^{-1}$, so 
we can abuse notation and consider $I_i = \Image(u_i)$ to be a sheaf of ideals.
Since $\Image(v) = I_1 + I_2$, we have $\coker(v) = \OO_X/(I_1 + I_2)$.
Thus $\Supp(\coker(v))$ is the scheme theoretic intersection
of $Z_1$ and $Z_2$ \cite[Tag 0C4H]{stacks-project}  (with
reduced induced structure). Since $Z_1, Z_2$ are minimal,
we must have $Z_1 = Z_2$. Hence the \emph{minimum}
$Z$ exists.
\end{proof}

\begin{definition}\label{def:base-locus}
Let $X$ be a Noetherian scheme with line bundle $L$. The \emph{base locus}
of $L$, denoted $\Bs(L)$, is the closed subscheme $Z$
of Proposition~\ref{prop:base-locus-exists}.
\end{definition}

Our definition of base locus is similar to that of
\cite[Section~1.1.B]{PAG}, in the case where $X$ is projective over $\CC$.
That is, $\Bs(L)$ can be interpreted as the support of the cokernel
of the natural map $H^0(X,L) \otimes_k L^{-1} \to \OO_X$.
Another common formulation is that of \cite[Section~2.7]{Birkar}.
Working with $X$ a projective scheme over an arbitrary field $k$, the paper defines
\begin{equation}\label{eq:usual-base-locus}
\Bs(L) = \{x \in X \mid s \text{ vanishes at } x \text{ for all } s \in H^0(X,L) \}.
\end{equation}

\begin{proposition}
Let $X$ be a scheme, proper over a field $k$,
with line bundle $L$. Then the definitions \eqref{def:base-locus},
 \eqref{eq:usual-base-locus}, and \cite[Section~1.1.B]{PAG}
  define the same reduced closed
subscheme $\Bs(L)$.
\end{proposition}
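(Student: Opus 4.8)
The plan is to reduce all three descriptions to a single one attached to a fixed basis of global sections. Since $X$ is proper over $k$, the space $V := H^0(X,L)$ is finite-dimensional over $k$; fix a basis $s_1,\dots,s_N$ and, using $\Hom_{\OO_X}(\OO_X,L) = H^0(X,L)$, let $u_0\colon \bigoplus_{i=1}^N \OO_X \to L$ be the associated morphism. I will show that each of the three prescriptions yields the closed set $\Supp(\coker u_0)$ with its reduced structure, so that they all agree.

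First I would check agreement with Definition~\ref{def:base-locus}. Any morphism $u\colon \bigoplus_{i=1}^n \OO_X \to L$ is an $n$-tuple of global sections of $L$, and expressing each of these in terms of the basis $s_1,\dots,s_N$ exhibits $u$ as a composite $\bigoplus_{i=1}^n \OO_X \xrightarrow{A} \bigoplus_{i=1}^N \OO_X \xrightarrow{u_0} L$ for a constant matrix $A$. Hence $\Image(u) \subseteq \Image(u_0)$ inside $L$, so the quotient map $L \twoheadrightarrow L/\Image(u_0)$ factors through $L/\Image(u)$, giving a surjection $\coker(u) \twoheadrightarrow \coker(u_0)$ and therefore $\Supp(\coker u_0) \subseteq \Supp(\coker u)$. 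Since $u_0$ is itself an admissible map, $\Supp(\coker u_0)$ is the minimum support, hence equals the subscheme $Z$ of Proposition~\ref{prop:base-locus-exists}.

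Next I would treat the remaining two descriptions. For the one in \cite[Section~1.1.B]{PAG}: the base scheme there is cut out by the image ideal $\mathfrak{b}$ of the evaluation map $H^0(X,L)\otimes_k L^{-1} \to \OO_X$, and choosing the basis identifies this map with $u_0 \otimes \id_{L^{-1}}$, whose cokernel is $\OO_X/\mathfrak{b} \cong (\coker u_0)\otimes L^{-1}$; since tensoring by a line bundle preserves supports, $V(\mathfrak{b}) = \Supp(\coker u_0)$ as reduced subschemes. For \eqref{eq:usual-base-locus}: at a point $x$, Nakayama's lemma shows $(\coker u_0)_x \neq 0$ exactly when the images of $s_1,\dots,s_N$ in the fiber $L \otimes k(x)$ all vanish, i.e.\ when each $s_i$ vanishes at $x$; as the $s_i$ span $V$, this is equivalent to every global section of $L$ vanishing at $x$, so $\Supp(\coker u_0)$ is precisely the set in \eqref{eq:usual-base-locus}. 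The only ingredient that is not purely formal is the finite-dimensionality of $H^0(X,L)$ — the one place properness enters — and the main thing to be careful about is consistently reading off the reduced structures from each formulation; beyond that bookkeeping there is no real obstacle.
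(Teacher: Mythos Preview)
Your proof is correct and follows essentially the same approach as the paper: both arguments hinge on the evaluation map from $H^0(X,L)\otimes_k\OO_X$ (you pick a basis and call it $u_0$), use finite-dimensionality of $H^0$ from properness, invoke the minimality in Definition~\ref{def:base-locus}, and use a Nakayama/stalk argument to identify the vanishing locus \eqref{eq:usual-base-locus} with the support of the cokernel. The only organizational difference is that you compare all three descriptions to the single set $\Supp(\coker u_0)$ (showing every admissible $u$ factors through $u_0$), whereas the paper first shows \eqref{eq:usual-base-locus} and the \cite{PAG} description coincide and then squeezes $Z$ between them; this is a cosmetic difference, not a different method.
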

\begin{proof}
Every $s \in H^0(X,L)$ defines a $\OO_X$-module
homomorphism $\phi_s: \OO_X \to L$ by $\phi_s(1)=s$.
Conversely, any homomorphism $\nu: \OO_X \to L$ 
gives a global section $\nu(1) \in H^0(X,L)$
\cite[Tag 01AL]{stacks-project}.
Thus we can define $u: H^0(X,L) \otimes_k \OO_X \to L$
by $u(s \otimes a) = \phi_s(a) = as$.

Let $Y$ be the base locus defined by Equation~\ref{eq:usual-base-locus}
and let $W$ be the base locus defined by \cite[Section~1.1.B]{PAG}.
If $x \in Y$, then $s(x) = 0$ for all $s \in H^0(X,L)$. 
This means $s \otimes k(x) = 0$ where $k(x)$ is the residue field at $x$.
So $u \otimes \id_{k(x)} = 0$ and thus $u$ cannot be onto the stalk $L_x$.
So $\coker(u)_x \neq 0$ and hence $x \in W$. On the other hand, if
$x \not\in Y$, then there exists $s \in H^0(X,L)$ such that $s \otimes k(x) \neq 0$.
We have $s_x$ invertible in $L_x \cong \OO_{X,x}$. So $u$ is onto the stalk $L_x$
and $x \not\in W$. Thus $Y=W$.

Let $Z$ be the base locus defined by Definition~\ref{def:base-locus}, and
$\oplus_{i=1}^n \OO_X {\to} L \to F \to 0$ be an exact sequence with $Z = \Supp(F)$.
Let $\phi_j: \OO_X \to L$ be the induced homomorphism from the $j$th component
of $\oplus_{i=1}^n \OO_X$. 
If $x \not\in Z$, then $\oplus_{i=1}^n \OO_{X,x} {\to} L_x$ is surjective.
Thus there exists $j \in \{1, \dots, n\}$ such that $\phi_j(1)_x$ is not
an element of the maximal submodule of $L_x$. Hence
$\phi_j(1) \otimes k(x) \neq 0$. In other words,  the global section
$\phi_j(1) \in H^0(X,L)$ does not vanish at $x$. Thus $x \not\in Y$.

On the other hand, $Z \subseteq Y=Z$ by the minimality in the definition of $Z$.
Hence $Z=Y=W$, as desired.
\end{proof}

Proving the existence of the stable base locus is similar to the projective case.

\begin{proposition}\label{prop:stable-base-locus-exists} Let $X$ be a Noetherian scheme with line bundle $L$.
For any $p, q > 0$, we have $\Bs(L^{pq}) \subseteq \Bs(L^p)$.
Thus there exists $m > 0$ such that for all $n > 0$, $\Bs(L^m) \subseteq \Bs(L^n)$
and $\Bs(L^m) = \Bs(L^{mn})$.
\end{proposition}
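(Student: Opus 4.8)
The plan is to prove the inclusion $\Bs(L^{pq}) \subseteq \Bs(L^p)$ first, and then extract the stabilizing exponent $m$ by a descending-chain argument, exactly parallel to the proof of Proposition~\ref{prop:base-locus-exists}.

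For the inclusion, I would start from a map $u: \oplus_{i=1}^n \OO_X \to L^p$ that realizes the minimum, so that $Z := \Supp(\coker u) = \Bs(L^p)$. Tensoring $u$ with itself $q$ times produces a map
$u^{\otimes q}: \oplus_{i=1}^{n^q} \OO_X \cong \bigl(\oplus_{i=1}^n \OO_X\bigr)^{\otimes q} \to (L^p)^{\otimes q} = L^{pq}$.
The crucial observation is that $\Supp(\coker u^{\otimes q}) = \Supp(\coker u) = Z$. To see this, twist everything by $L^{-p}$ (resp.\ by $L^{-pq}$) as in the proof of Proposition~\ref{prop:base-locus-exists}, so that the image of $u$ becomes a sheaf of ideals $I \subseteq \OO_X$ with $V(I) = Z$, and the image of $u^{\otimes q}$ becomes $I^q \subseteq \OO_X$. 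Since $\sqrt{I^q} = \sqrt{I}$, we get $\Supp(\OO_X/I^q) = \Supp(\OO_X/I) = Z$; equivalently, at each point $x$ we have $\coker(u)_x = 0$ iff $I_x = \OO_{X,x}$ iff $I_x^q = \OO_{X,x}$ iff $\coker(u^{\otimes q})_x = 0$. Then the minimality defining $\Bs(L^{pq})$, applied to the map $u^{\otimes q}$ into $L^{pq}$, forces $\Bs(L^{pq}) \subseteq \Supp(\coker u^{\otimes q}) = Z = \Bs(L^p)$.

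For the second assertion, since $X$ is a Noetherian topological space, the collection of closed subsets $\{\Bs(L^n) : n > 0\}$ has a minimal element; choose $m > 0$ with $\Bs(L^m)$ minimal. Given any $n > 0$, applying the first part twice gives $\Bs(L^{mn}) \subseteq \Bs(L^m)$ (taking $p = m$, $q = n$) and $\Bs(L^{mn}) \subseteq \Bs(L^n)$ (taking $p = n$, $q = m$). Minimality of $\Bs(L^m)$ together with $\Bs(L^{mn}) \subseteq \Bs(L^m)$ yields $\Bs(L^{mn}) = \Bs(L^m)$, and hence $\Bs(L^m) = \Bs(L^{mn}) \subseteq \Bs(L^n)$. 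This establishes both $\Bs(L^m) \subseteq \Bs(L^n)$ and $\Bs(L^m) = \Bs(L^{mn})$ for all $n > 0$.

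The only mildly delicate point is checking that tensoring local trivializations of $L^p$ yields compatible local trivializations of $L^{pq}$, so that the image of $u^{\otimes q}$ is genuinely the ideal-sheaf power $I^q$; but this is a routine local computation, and the support equality can in any case be verified stalk-by-stalk as indicated, so I do not expect a real obstacle. No input beyond Noetherianity of $X$ and Proposition~\ref{prop:base-locus-exists} is needed.
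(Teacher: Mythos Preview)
Your argument is correct and essentially matches the paper's: both take a map $u$ realizing $\Bs(L^p)$, form $u^{\otimes q}$, and observe that its cokernel is supported in $\Bs(L^p)$ (you via the ideal-power/radical computation, the paper via surjectivity on the open complement), then invoke minimality and the Noetherian property exactly as you do. The only cosmetic difference is that the paper reduces to $p=1$ and only states the containment $\Supp(\coker u^{\otimes q}) \subseteq \Bs(L)$ rather than your equality, but neither point is substantive.
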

\begin{proof}
It suffices to prove the first claim for $p=1$. Let $Y = \Bs(L)$ and $j > 0$,
$u: \oplus_{i=1}^j \OO_X \to L$ such that $Y = \Supp(\coker(u))$.
Taking $U = X \setminus Y$, we have that $u\vert_U : \oplus_{i=1}^m \OO_U \to L\vert_U$
is surjective.
Thus $(u\vert_U)^{\otimes q} : (\oplus_{i=1}^m \OO_U)^{\otimes q} \to L^q\vert_U$
is surjective. Therefore $\Bs(L^q) \subseteq \Supp(\coker(u^{\otimes q})) \subseteq \Bs(L)$
by the minimality of $\Bs(L^q)$.

The remaining claims follow by the Noetherian property on closed sets. 
See for example \cite[Proposition~2.1.21]{PAG}.
\end{proof}

\begin{definition}\label{def:stable-base-locus}
Let $X$ be a Noetherian scheme with line bundle $L$. The \emph{stable base locus}
of $L$, denoted $\SBs(L)$, is the closed subscheme $Y$
of Proposition~\ref{prop:stable-base-locus-exists}.
\end{definition}

We work over a general Noetherian scheme so that we need not worry about
the behavior of $H^0(X,L)$ when $X$ is not reduced or not irreducible.
Also, right exact sequences can be thickened
\cite[$\mathrm{IV}_3$, 8.5.6]{ega} and 
these sequences
behave well under pullbacks, leading to the following lemma.

\begin{lemma}\label{lem:base-locus-pullback}
Let $X,Y$ be Noetherian schemes, $f:Y \to X$ a morphism,
and $L$ a line bundle on $X$. Then
\[
\Bs(f^*L) \subseteq f^{-1}(\Bs(L)), \qquad \SBs(f^*L) \subseteq f^{-1}(\SBs(L)).
\]
In particular, if $Y$ is a locally closed subscheme of $X$, then
\[
\Bs(L\vert_Y) \subseteq Y\cap \Bs(L), \qquad \SBs(L\vert_Y) \subseteq Y\cap \SBs(L).
\]
\end{lemma}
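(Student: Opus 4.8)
The plan is to exploit the right-exactness of $f^{*}$, exactly as the remark preceding the lemma suggests. First I would handle the inclusion $\Bs(f^{*}L)\subseteq f^{-1}(\Bs(L))$. By Proposition~\ref{prop:base-locus-exists} applied to $X$, fix $n\geq 1$ and $u\colon\oplus_{i=1}^{n}\OO_X\to L$ with $\Supp(\coker u)=\Bs(L)$; set $F=\coker u$, so $\oplus_{i=1}^{n}\OO_X\to L\to F\to 0$ is exact. Applying the right-exact functor $f^{*}$ and using $f^{*}\OO_X\cong\OO_Y$ yields an exact sequence $\oplus_{i=1}^{n}\OO_Y\to f^{*}L\to f^{*}F\to 0$. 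Since $\Supp(f^{*}F)\subseteq f^{-1}(\Supp F)=f^{-1}(\Bs(L))$ (if $f(y)\notin\Supp F$ then $F_{f(y)}=0$, hence $(f^{*}F)_y=0$), the minimality clause of Proposition~\ref{prop:base-locus-exists} applied on $Y$ to $f^{*}L$ forces $\Bs(f^{*}L)\subseteq\Supp(f^{*}F)\subseteq f^{-1}(\Bs(L))$. Here $f^{-1}(\Bs(L))$ is closed because $f$ is continuous, so this is a genuine containment of closed subsets.

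For the stable base locus I would combine this with Proposition~\ref{prop:stable-base-locus-exists}: choose $m>0$ with $\Bs(L^{m\ell})=\SBs(L)$ for every $\ell>0$, and $m'>0$ with $\Bs((f^{*}L)^{m'\ell})=\SBs(f^{*}L)$ for every $\ell>0$, and put $N=mm'$. Then $\SBs(f^{*}L)=\Bs((f^{*}L)^{N})=\Bs(f^{*}(L^{N}))$ since $f^{*}$ commutes with tensor powers, and applying the first inclusion to the line bundle $L^{N}$ gives $\Bs(f^{*}(L^{N}))\subseteq f^{-1}(\Bs(L^{N}))=f^{-1}(\SBs(L))$. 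Chaining these gives $\SBs(f^{*}L)\subseteq f^{-1}(\SBs(L))$.

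Finally, the ``in particular'' clause is the special case $f=i\colon Y\hookrightarrow X$ of a locally closed immersion, where $L\vert_Y=i^{*}L$ by definition and $i^{-1}(Z)=Y\cap Z$ for any subset $Z\subseteq X$; substituting $Z=\Bs(L)$ and $Z=\SBs(L)$ produces the two displayed inclusions.

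I do not anticipate a serious obstacle. The two points needing care are: (a) $f^{*}$ is only right exact, so the whole argument must be phrased through cokernels, and one only gets $\Supp(f^{*}F)\subseteq f^{-1}(\Supp F)$ rather than equality — but that inclusion is all the minimality of $\Bs(f^{*}L)$ requires; and (b) in the stable case one must pass to a common multiple $N$ so that both $\Bs(L^{N})$ and $\Bs((f^{*}L)^{N})$ have already stabilized, then invoke $(f^{*}L)^{N}\cong f^{*}(L^{N})$.
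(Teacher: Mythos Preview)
Your proposal is correct and follows essentially the same route as the paper: pull back the defining right-exact sequence, compare supports, and invoke minimality; then deduce the stable case and the locally closed case. The only difference is cosmetic: the paper cites \cite[Tag 056J]{stacks-project} to get the equality $\Supp(f^{*}F)=f^{-1}(\Supp F)$ for finite-type quasi-coherent $F$, whereas you use (and correctly observe that you only need) the containment $\Supp(f^{*}F)\subseteq f^{-1}(\Supp F)$.
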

\begin{proof}
Let $Z = \Bs(L)$ with associated right exact sequence
\[
\oplus_{i=1}^n \OO_X \to L \to F \to 0.
\]
Pulling back by $f$ is right exact \cite[Remark~7.9]{Gortz}, so we have
\[
\oplus_{i=1}^n \OO_Y \to f^*L \to f^*F \to 0.
\]
By \cite[Tag 056J]{stacks-project}, we have $\Supp(f^*F) = f^{-1}(\Supp(F)) = f^{-1}(Z)$.
Then by the minimality in the definition of base locus, $\Bs(f^*L) \subseteq f^{-1}(Z)$.

The claim regarding stable base locus then follows easily.
\end{proof}

\begin{remark}\label{rem:stable-base-locus-equality}
Let $k \subseteq k'$ be an extension of fields and let $X$ be a projective
scheme over $k$, and $L$ a line bundle on $X$. Then 
taking $f: X \times_k k' \to X$, Birkar shows $\Bs(f^*L) = f^{-1}(Bs(L))$
and $\SBs(f^*L) = f^{-1}(\SBs(L))$ \cite[Section~2.7]{Birkar}.
The proof only uses the finite dimensionality of $H^0(X,L)$
and $H^0(X \times_k k', f^*L) \cong H^0(X,L) \otimes_k k'$, so this
works for proper schemes as well.

Fujita shows that when $Y,X$ are proper over an algebraically closed field,
with $X$ normal and integral, $Y$ irreducible, and $f:Y \to X$ surjective,
then $\SBs(f^*L) = f^{-1}(\SBs(L))$ \cite[Theorem~1.20]{Fujita}. 
On the other hand, \cite[1.21]{Fujita} gives an example of proper
containment when $X$ is a non-normal rational curve and $Y$ is its normalization.
\end{remark}

\section{Main results}\label{sec:main-results}

As stated, a line bundle can be nef, yet not arithmetically nef. However,
in this section we examine some properties that guarantee arithmetic nefness.
Weaker versions of some of these results were mentioned in 
\cite[Appendix]{ArapuraAppendix} without proof.

\begin{proposition}\label{prop:numerically-trivial-arith-nef}
Let $X$ be a proper scheme over a field $k$ with line bundle $L$. If $L$ is numerically trivial
(that is, $L$ and $L^{-1}$ are nef),
then $L$ and $L^{-1}$ are arithmetically nef (and hence arithmetically numerically trivial).
\end{proposition}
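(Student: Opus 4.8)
The plan is to exploit that numerical triviality, unlike bare nefness, does propagate through an arithmetic thickening, because after passing to a suitable power it is witnessed by an \emph{algebraic} family of line bundles, and algebraic families spread out. First I would make the standard reductions. The restriction of a nef line bundle to a closed subscheme is nef, and nefness survives extension of the base field, so the hypothesis that $L$ is numerically trivial passes to $L_{\red}$, to the restrictions to the irreducible components, and to the base change to $\bar k$; by Lemmas~\ref{lem:irreducible-components} and~\ref{lem:arith-nef-base-change} I may thus assume $X$ is integral and $k=\bar k$. Numerical triviality also pulls back along proper morphisms, so by Chow's lemma and Lemma~\ref{lem:arith-nef-surjective}(2) I may assume further that $X$ is projective. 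Then the N\'eron--Severi group of $X$ is finitely generated and numerical equivalence agrees with algebraic equivalence up to torsion (see, e.g., \cite{kleiman}), so the class of $L$ in $\mathrm{NS}(X)$ is torsion and $L^{\otimes m}$ is algebraically equivalent to $\OO_X$ for some $m\geq 1$. Since $L^{\otimes m}$ is arithmetically nef if and only if $L$ is, by Lemma~\ref{lem:arith-nef-tensor}, and likewise for their inverses, the problem reduces to: if a line bundle $M$ on $X$ is algebraically equivalent to $\OO_X$, then $M$ is arithmetically nef.

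To prove this, I would spread out a witnessing algebraic family. By definition of algebraic equivalence there is a connected $\bar k$-scheme $T$ of finite type, a line bundle $\mathcal M$ on $X\times_{\bar k}T$, and points $t_0,t_1\in T(\bar k)$ such that $\mathcal M$ restricts to $\OO_X$ over $t_0$ and to $M$ over $t_1$. Thickening this whole configuration over a finitely generated $\ZZ$-subalgebra $R_\alpha\subseteq\bar k$, one obtains, for $\alpha$ large and after shrinking $\Spec R_\alpha$ to a basic open, a projective flat morphism $X_\alpha\to S_\alpha=\Spec R_\alpha$ with geometrically integral fibres, a finite-type morphism $T_\alpha\to S_\alpha$ with geometrically connected fibres, an invertible sheaf $\mathcal M_\alpha$ on $X_\alpha\times_{S_\alpha}T_\alpha$, sections $\tau_0,\tau_1\colon S_\alpha\to T_\alpha$ lifting $t_0,t_1$, and an isomorphism between the restriction of $\mathcal M_\alpha$ along $(\id_{X_\alpha},\tau_0)$ and $\OO_{X_\alpha}$; these are standard applications of the limit formalism (cf.\ \cite[$\mathrm{IV}_3$, Section~8]{ega}). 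Put $M_\alpha$ equal to the restriction of $\mathcal M_\alpha$ along $(\id_{X_\alpha},\tau_1)$, a thickening of $M$.

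It remains to see that $M_\alpha$ is nef. Fix $s\in S_\alpha$; then $\mathcal M_\alpha$ restricts to a line bundle $\mathcal M_{\alpha,s}$ on $X_{\alpha,s}\times_{k(s)}T_{\alpha,s}$ that is trivial over $\tau_0(s)$, with $T_{\alpha,s}$ geometrically connected. For any integral curve $C\subseteq X_{\alpha,s}$, restricting $\mathcal M_{\alpha,s}$ to $C\times_{k(s)}T_{\alpha,s}$ yields a flat proper family of line bundles on $C$ over the connected base $T_{\alpha,s}$, along which the degree is constant; as this degree equals $\deg\OO_C=0$ over $\tau_0(s)$, we get $\deg(M_{\alpha,s}|_C)=0$. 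Hence $M_{\alpha,s}$ is numerically trivial, in particular nef, on every fibre, so $M$ is arithmetically nef; applying this to $L^{\otimes m}$ and to $L^{\otimes -m}$ yields the proposition. I expect the spreading-out step to be the main obstacle: flatness, properness and geometric integrality of the fibres of $X_\alpha$, geometric connectedness of the fibres of $T_\alpha$, invertibility of $\mathcal M_\alpha$, and triviality of the $\tau_0$-slice must all be realised simultaneously over a single $\Spec R_\alpha$. Each is a routine consequence of the constructibility and limit theorems for finitely presented morphisms, but they must be assembled coherently; the only ingredient genuinely external to this paper is the comparison of numerical with algebraic equivalence (and finiteness of $\mathrm{NS}(X)$) invoked in the reduction.
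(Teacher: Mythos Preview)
Your argument is correct and takes a genuinely different route from the paper. After the same initial reductions (algebraically closed field, integral $X$, projective via Chow and Lemma~\ref{lem:arith-nef-surjective}), the paper further reduces to \emph{smooth} $X$ via de~Jong's alterations, then thickens to a smooth projective family with an ample $H_\alpha$ and uses that the intersection numbers $(L_s^r.H_s^{\dim X - r})$ are constant in the flat family; numerical triviality of each fibre $L_s$ then follows from the Hodge Index criterion \cite[p.~305, Corollary~3]{kleiman}. You instead invoke Matsusaka's comparison (numerical $=$ algebraic equivalence up to torsion) to replace $L$ by a power algebraically equivalent to $\OO_X$, spread out the witnessing family $(T,\mathcal M,t_0,t_1)$ with geometrically connected fibres of $T_\alpha\to S_\alpha$, and conclude by constancy of degree on curves along the connected parameter space. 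Your approach avoids alterations and the Hodge Index Theorem but trades them for the theorem of the base and a more delicate spreading-out package (sections, trivialisation along $\tau_0$, geometric connectedness of the fibres of $T_\alpha$); the paper's approach keeps the spreading out minimal but needs the reduction to smooth $X$ to make the intersection-theoretic criterion available. Both are valid; one small point worth making explicit in your write-up is that the comparison of numerical and algebraic equivalence you cite from \cite{kleiman} holds for arbitrary projective schemes over a field (via $\mathrm{Pic}^\tau$), so you need not pass to smooth $X$.
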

\begin{proof}
We need only show that $L$ is arithmetically nef.

First, nefness is stable under both field extension and descent 
by \cite[Lemma~2.18]{keeler}, as is arithmetic nefness by Lemma~\ref{lem:arith-nef-base-change}.
So we may assume that $k$ is algebraically closed.
Second, we may similarly assume that $X$ is integral by 
definition of nef and Lemma~\ref{lem:irreducible-components}.
Finally, any pullback of $L$ by a proper morphism is also
numerically trivial \cite[Lemma~2.17]{keeler}. Thus by Lemma~\ref{lem:arith-nef-surjective}
we may replace $X$ with a Chow cover and thus assume that $X$ is projective.
Via Alteration of Singularities, we can even assume that
$X$ is smooth over $k$ \cite[Theorem~4.1]{deJong}.
Let $S_\alpha = \Spec R_\alpha$.
By Lemma~\ref{lem:thickened-smooth}, we can assume all thickenings
$f_\alpha: X_\alpha \to S_\alpha$ are smooth and projective, with
geometrically integral, constant dimensional fibers.

Let $H$ be an ample line bundle on $X$. There exists a thickening
where $H_\alpha$ is ample \cite[$\mathrm{III}_1$, 4.7.1]{ega},
and so any subsequent $H_\beta$ is also ample \cite[Tag 0893]{stacks-project}.
For any $s \in S_\alpha$, let $H_s, L_s$ be the pullbacks of $H_\alpha, L_\alpha$
to the fiber $X_s$ of $f_\alpha$ over $s$. (We omit the $\alpha$ since the thickening
is not changing.) 
Since $f_\alpha$ is smooth, and hence flat \cite[Tag 01VF]{stacks-project}, 
and $S_\alpha$ is irreducible, we have 
that the intersection numbers $(L_{\overline{s}}^r.H_{\overline{s}}^{\dim X - r})$
are constant as $s$ varies, and $r=0, \dots, \dim X$ is fixed \cite[VI.2.9]{kollar}.
(Here the exponent means self-intersection, not tensor product.)


Let $L_{\overline{s}} = L_s \otimes \overline{k(s)}, 
H_{\overline{s}} = H_s \otimes \overline{k(s)}$ on
the smooth, (geometrically) integral scheme $X_{\overline{s}} = X_s \times_{k(s)} \overline{k(s)}$.
Tensoring cohomology of $H_s^m \otimes L_s^n$ (for fixed $m,n \in \ZZ$) with
the algebraic closure of $k(s)$ will not change the value of the Euler
characteristic \cite[Tag 02KH]{stacks-project}.
The intersection numbers $(L_{\overline{s}}^r.H_{\overline{s}}^{\dim X - r})$,
being derived from the Euler characteristic \cite[Section~I.2]{kleiman},
are thus also constant with respect to $s$.

If $\dim X = 0$, then $X$ is just a point and $L\cong \OO_X$ thickens
to a relatively numerically trivial line bundle.
If $\dim X = 1$, then $(L_{{s}}. X_{s}) = 0$
for all $s$. So again, $L_s$ is nef on every fiber and hence arithmetically nef.

Note that $L_{\overline{s}}$ is numerically trivial if and only if
the same holds for $L_s$ \cite[Lemma~2.18]{keeler}.
If $\dim X \geq 2$, then
${L}_{\overline{s}}$ is numerically trivial if and only if
\[
({L}_{\overline{s}}.{H}_{\overline{s}}^{\dim X -1}) 
= ({L}_{\overline{s}}^2.{H}_{\overline{s}}^{\dim X - 2}) =  0
\]
by \cite[p.~305, Corollary~3]{kleiman}, as a consequence of the Hodge Index Theorem.
The intersection numbers equal $0$ at the generic point and hence
for all $s$. (See \cite[p.~335]{kleiman} for a similar argument.)
Thus $L$ is arithmetically nef, as is $L^{-1}$.
\end{proof}

\begin{remark} Instead of changing to an algebraically closed field,
one could use a relativized version of Alteration of Singularities,
as in \cite[Lemma~E1.3]{keeler-cor}, but replacing $R_\alpha$
with a finite extension might be required.
\end{remark}

We now have that arithmetic nefness is preserved under
numerical equivalence. We write $L \equiv L'$ if $L,L'$
are numerically equivalent. That is, $(L.C) = (L'.C)$ for all integral curves $C$.

\begin{corollary}\label{cor:arith-nef-numerically-equivalent}
Let $X$ be a proper scheme over a field $k$ with line bundles
$L,L'$. Suppose $L \equiv L'$. Then
$L$ is arithmetically nef if and only if $L'$ is arithmetically nef.
\end{corollary}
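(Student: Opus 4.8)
The plan is to reduce the statement to Proposition~\ref{prop:numerically-trivial-arith-nef} via the observation that $L \equiv L'$ means $L \otimes (L')^{-1}$ is numerically trivial. Concretely, set $N = L \otimes (L')^{-1}$. Since $(N.C) = (L.C) - (L'.C) = 0$ for every integral curve $C$, the line bundle $N$ is numerically trivial, so by Proposition~\ref{prop:numerically-trivial-arith-nef} both $N$ and $N^{-1}$ are arithmetically nef.

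Now suppose $L$ is arithmetically nef. Then $L' = L \otimes N^{-1}$: indeed $L \otimes N^{-1} = L \otimes (L')^{-1,-1} \otimes L^{-1}$... more carefully, $N^{-1} = (L')\otimes L^{-1}$, so $L \otimes N^{-1} \cong L \otimes L' \otimes L^{-1} \cong L'$. Since $L$ is arithmetically nef and $N^{-1}$ is arithmetically nef (by Proposition~\ref{prop:numerically-trivial-arith-nef}), Lemma~\ref{lem:arith-nef-tensor}(2) gives that $L \otimes N^{-1} \cong L'$ is arithmetically nef. The converse is symmetric, exchanging the roles of $L$ and $L'$ and using $L \cong L' \otimes N$ with $N$ arithmetically nef.

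There is essentially no obstacle here; the work has all been done in Proposition~\ref{prop:numerically-trivial-arith-nef}. The only point requiring a word of care is that tensoring the chosen thickenings is legitimate: given a thickening on which $L_\alpha$ is nef and (after enlarging $\alpha$, using Remark~\ref{rem:once-nef-always-nef}) a common thickening on which $(N^{-1})_\alpha$ is also nef, the thickened line bundle $L_\alpha \otimes (N^{-1})_\alpha$ is a thickening of $L'$ and is nef because relative nefness is preserved under tensor products of nef line bundles on each fiber — this is exactly what underlies Lemma~\ref{lem:arith-nef-tensor}(2), so it may simply be invoked. Thus the proof is a two-line deduction from the preceding proposition and lemma.

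\begin{proof}
Let $N = L \otimes (L')^{-1}$. For every integral curve $C \subseteq X$ we have $(N.C) = (L.C) - (L'.C) = 0$, so $N$ is numerically trivial. By Proposition~\ref{prop:numerically-trivial-arith-nef}, both $N$ and $N^{-1}$ are arithmetically nef. If $L$ is arithmetically nef, then since $L' \cong L \otimes N^{-1}$, Lemma~\ref{lem:arith-nef-tensor}(2) shows $L'$ is arithmetically nef. Conversely, if $L'$ is arithmetically nef, then since $L \cong L' \otimes N$, the same lemma shows $L$ is arithmetically nef.
\end{proof}
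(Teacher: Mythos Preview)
Your proof is correct and follows exactly the paper's approach: the paper's proof is the single sentence ``This follows from Lemma~\ref{lem:arith-nef-tensor} and Proposition~\ref{prop:numerically-trivial-arith-nef},'' and you have simply spelled out the intended deduction via $N = L \otimes (L')^{-1}$.
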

\begin{proof} This follows from Lemma~\ref{lem:arith-nef-tensor} 
and Proposition~\ref{prop:numerically-trivial-arith-nef}.
\end{proof}

At least when $X$ is a curve, nef and arithmetically nef are always the same.

\begin{corollary}\label{cor:arith-nef-on-curves}
Let $X$ be a proper scheme over a field $k$ with $\dim X \leq 1$. 
If $L$ is a nef line bundle,
then $L$ is arithmetically nef.
\end{corollary}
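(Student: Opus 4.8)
The plan is to reduce to the smooth projective case and then apply the structure already established for numerically trivial bundles. First I would use Lemma~\ref{lem:irreducible-components} to assume $X$ is integral, and Lemma~\ref{lem:arith-nef-base-change} together with \cite[Lemma~2.18]{keeler} to assume $k$ is algebraically closed. Since $\dim X \leq 1$, the only interesting case is $\dim X = 1$, so $X$ is an integral proper curve over $k$. Using Lemma~\ref{lem:arith-nef-surjective}(2), I would pass to the normalization $\nu\colon \widetilde{X} \to X$, which is surjective and proper, so it suffices to show $\nu^*L$ is arithmetically nef; thus I may assume $X$ is a smooth projective integral curve.

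Now $L$ nef on a curve means $\deg L = (L.X) \geq 0$. The key point is that this degree is preserved in a thickening. Concretely, choose a thickening $(X_\alpha \to \Spec R_\alpha, L_\alpha)$ as in Lemma~\ref{lem:thickened-smooth}, so that $f_\alpha$ is smooth and projective with geometrically integral one-dimensional fibers. Since $f_\alpha$ is flat and $\Spec R_\alpha$ is irreducible, the intersection number $(L_{\overline{s}}.X_{\overline{s}})$ is constant as $s$ ranges over $\Spec R_\alpha$, by \cite[VI.2.9]{kollar}; and at the generic point this number is $\deg L \geq 0$. Hence $(L_s.X_s) \geq 0$ for every closed $s$, which means $L_s$ is nef on the (one-dimensional) fiber $X_s$ for all $s$. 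Therefore $L_\alpha$ is $f_\alpha$-nef, and since $\Spec R_\alpha$ is affine, $L_\alpha$ is nef \cite[Proposition~2.15]{keeler}, so $L$ is arithmetically nef.

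Alternatively, and perhaps more cleanly, one can bypass the explicit intersection-number computation by invoking the results just proved: on a smooth projective curve, a nef line bundle $L$ with $\deg L > 0$ is ample, so $L$ thickens to an ample (hence nef) line bundle by \cite[$\mathrm{IV}_3$, 9.6.4]{ega}; and if $\deg L = 0$, then $L$ is numerically trivial, so $L$ is arithmetically nef by Proposition~\ref{prop:numerically-trivial-arith-nef}. Either way the case $\dim X = 1$ is settled, and $\dim X = 0$ is trivial since then $L \cong \OO_X$.

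I do not expect a serious obstacle here; the statement is essentially a corollary of the machinery already in place. The one point requiring a little care is the reduction to the smooth case: the normalization $\widetilde{X} \to X$ of a proper integral curve over a field is finite (hence proper) and surjective, so Lemma~\ref{lem:arith-nef-surjective}(2) applies, and $\widetilde{X}$ is regular, hence smooth over the (algebraically closed, or even perfect) field $k$. If one prefers to avoid normalization, one can instead apply a Chow lemma and alteration argument as in the proof of Proposition~\ref{prop:numerically-trivial-arith-nef}, but for curves the normalization is the most economical route.
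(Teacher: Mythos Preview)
Your proposal is correct, and your ``alternative'' argument is exactly the paper's proof: reduce to a smooth integral projective curve over an algebraically closed field, then observe that a nef line bundle there is either ample (hence thickens to ample) or numerically trivial (hence arithmetically nef by Proposition~\ref{prop:numerically-trivial-arith-nef}). The only difference is in the reduction step: the paper invokes ``the same argument as in the proof of Proposition~\ref{prop:numerically-trivial-arith-nef}'' (Chow cover plus alterations), whereas you use the normalization $\nu\colon\widetilde{X}\to X$, which for curves is more direct and perfectly valid since $\nu$ is finite and surjective and $\widetilde{X}$ is smooth over the algebraically closed field; your first approach via constancy of $(L_{\overline s}.X_{\overline s})$ also works but is redundant once Proposition~\ref{prop:numerically-trivial-arith-nef} is available.
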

\begin{proof} The case of $\dim X = 0$ is trivial since any line bundle is ample.
Using the same argument as in the proof of
Proposition~\ref{prop:numerically-trivial-arith-nef},
we may assume that $X$ is a smooth, integral projective curve
over an algebraically closed field. Then $L$ is either numerically trivial
or ample, and hence arithmetically nef by 
Proposition~\ref{prop:numerically-trivial-arith-nef}
or \cite[$\mathrm{III}_1$, 4.7.1]{ega}.
\end{proof}

We now examine connections between nefness of $L$ and the (stable) base locus
$B = \SBs(L)$.
Theorems in this vein already exist. For example, when $X$ is proper over
an algebraically closed field,
\cite[Theorem~1.10]{Fujita} shows that if $L\vert_{\Bs(L)}$ is ample,
then $L$ is semiample. On the other hand, \cite[Example~1.16]{Fujita}
shows $L\vert_{\Bs(L)}$ can be semiample, yet $L$ is not semiample.

Recently, Birkar showed that for any projective scheme over a field $k$,
if $L$ is nef,
then there exists a subscheme $Z$ with $Z_{\red}$ equal to the \emph{augmented base locus}
$\AB(L)$, such that $L$ is semiample if and only if $L\vert_Z$ is semiample 
\cite[Theorems~1.4, 1.5]{Birkar}. See \cite[Section~1]{Birkar} for a
history of similar results.

Also, the \emph{diminished base locus} or \emph{non-nef locus} $\DB(L)$
was studied in \cite{ELMMP,Mustata}. One has that $\DB(L) \subseteq \SBs(L)$
and $L$ is nef if and only if $\DB(L) = \emptyset$.
(Note that it is possible for $\DB(L)$ to not be closed \cite{Les}.)
The following theorem fits with these properties of $\DB(L)$ since
the non-nef locus is contained in the stable base locus.

\begin{theorem}\label{thm:nef-on-stable-base-locus}
Let $X$ be a proper scheme over a field $k$ with line bundle $L$.
Then $L$ is nef if and only if $L\vert_{\SBs(L)}$ is nef.
\end{theorem}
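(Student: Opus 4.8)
The plan is to prove the nontrivial direction: if $L\vert_{\SBs(L)}$ is nef, then $L$ is nef. (The converse is immediate from Lemma~\ref{lem:arith-nef-surjective}, or rather its non-arithmetic analogue: nefness restricts to closed subschemes.) The strategy is to reduce to checking nefness on integral curves $C \subseteq X$ and to split into two cases according to whether $C$ meets $\SBs(L)$.

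First I would reduce to the case where $k$ is algebraically closed (nefness is preserved under field extension and descent by \cite[Lemma~2.18]{keeler}, and by Remark~\ref{rem:stable-base-locus-equality} the stable base locus behaves well under such extensions). By the definition of nef, it suffices to show $(L.C) \geq 0$ for every integral curve $C \subseteq X$. Fix such a $C$. If $C \subseteq \SBs(L)$, then $(L.C) = (L\vert_{\SBs(L)}.C) \geq 0$ by hypothesis, since $L\vert_{\SBs(L)}$ is nef. So the real content is the case $C \not\subseteq \SBs(L)$. In that case $C \cap \SBs(L)$ is a proper closed subset of $C$, hence finite. Now choose $m>0$ as in Proposition~\ref{prop:stable-base-locus-exists}, so that $\Bs(L^m) = \SBs(L)$ as sets. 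On the open set $U = X \setminus \SBs(L)$, the bundle $L^m$ is globally generated; in particular $L^m\vert_U$ is nef, and more to the point, there is a section of $L^m$ (or a finite collection giving a map to projective space) whose zero locus meets $C$ in a finite set contained in $C \cap \SBs(L)$. The idea is that $C$ meets the base locus of $L^m$ in finitely many points, so we can find an effective divisor $D \in |L^m|$ (at least after further raising the power, or working on a suitable cover) with $C \not\subseteq \Supp D$; then $(L^m.C) = (D.C) = \deg(D\vert_C) \geq 0$, and hence $(L.C) \geq 0$.

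The main obstacle is making the last step rigorous when $X$ is not projective, when $X$ is non-reduced or reducible, or when $|L^m|$ may be small (even empty as a linear system on $X$ itself, despite $L^m$ being globally generated off $\SBs(L)$ — global generation on $U$ is a statement about the sheaf map from $\bigoplus \OO_X$, not about $H^0$). To handle this cleanly I would instead argue as follows. By Lemma~\ref{lem:irreducible-components} and the definition of nef we may assume $X$ is integral; by passing to a Chow cover or alteration (as in the proof of Proposition~\ref{prop:numerically-trivial-arith-nef}, using \cite[Theorem~4.1]{deJong}) — and using Lemma~\ref{lem:base-locus-pullback} together with the surjectivity part of the nef criterion for proper surjective morphisms — we may assume $X$ is projective. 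Now on the projective integral scheme $X$, replace $L$ by $L^m$ so that $\Bs(L) = \SBs(L)$, and let $u: \bigoplus_{i=1}^n \OO_X \to L$ realize $\SBs(L) = \Supp(\coker u)$. For an integral curve $C \not\subseteq \SBs(L)$, the restriction $u\vert_C: \bigoplus \OO_C \to L\vert_C$ is surjective over the generic point of $C$, so $L\vert_C$ is generated off a finite set of closed points of $C$; a line bundle on an integral projective curve that is generically globally generated by a subsheaf of $\OO_C^{\oplus n}$ has nonnegative degree (the image of $u\vert_C$ in $L\vert_C$ is a subsheaf of the form $\mathcal{I}\cdot L\vert_C$ for a nonzero ideal sheaf $\mathcal{I}$, forcing $\deg L\vert_C = \deg(\coker)\geq 0$). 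Hence $(L.C) \geq 0$, completing the proof. The delicate points to get right are: (i) the reduction to projective $X$ via alterations preserves the needed relationship between the stable base locus upstairs and downstairs (here Lemma~\ref{lem:base-locus-pullback} gives containment in the right direction, and that suffices because the upstairs curves either map to points or map finitely onto downstairs curves), and (ii) the elementary but essential degree computation on a possibly singular integral curve $C$, which is where one uses that $\coker(u\vert_C)$ is a torsion sheaf.
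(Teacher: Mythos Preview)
Your overall architecture matches the paper's: reduce to $k$ algebraically closed, test on integral curves $C\subseteq X$, and split according to whether $C\subseteq\SBs(L)$. The difference is in how you dispatch the case $C\not\subseteq\SBs(L)$.

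The paper argues via Fujita: from $\SBs(L\vert_C)\subseteq C\cap\SBs(L)$ (Lemma~\ref{lem:base-locus-pullback}) one gets that $\SBs(L\vert_C)$ is finite, and then \cite[Corollary~1.14]{Fujita} forces $\SBs(L\vert_C)=\emptyset$, so $L\vert_C$ is semiample and hence $(L.C)\geq 0$. You instead restrict the generating map $u\colon\OO_X^{\oplus n}\to L$ (after replacing $L$ by a power) to $C$ and read off $(L.C)\geq 0$ from the short exact sequence on $C$. This is more elementary---it avoids the appeal to Fujita---and is perfectly valid: the image of $u\vert_C$ is a globally generated rank-one torsion-free sheaf, hence has nonnegative degree, and the cokernel is torsion of nonnegative length. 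Two small corrections: the equality you wrote, $\deg L\vert_C=\deg(\coker)$, should be the inequality $\deg L\vert_C=\deg(\operatorname{Im}u\vert_C)+\operatorname{length}(\coker)\geq 0$; and your reduction to projective $X$ via a Chow cover or alteration, while logically sound (you correctly note that Lemma~\ref{lem:base-locus-pullback} gives the needed containment of stable base loci, and nefness descends along proper surjections), is unnecessary---your degree computation lives entirely on the proper curve $C$ and uses nothing about the ambient $X$. The paper does not make this reduction. Dropping it streamlines your argument and makes it strictly shorter than the paper's.
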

\begin{proof}
Since nefness and the stable base locus 
are stable under field extension and descent,
by \cite[Lemma~2.18]{keeler} and Remark~\ref{rem:stable-base-locus-equality},
we can assume $k$ is algebraically closed. This will 
match the hypotheses of \cite{Fujita,kleiman}.

If $L$ is nef, then clearly $L\vert_{\SBs(L)}$ is nef.
So suppose $L\vert_{\SBs(L)}$ is nef. Let $C \subseteq X$
be an integral curve. If $C \subseteq \SBs(L)$, then 
$(L.C) = (L\vert_{\SBs(L)}.C)$ \cite[p.~298, Proposition~5]{kleiman},
and $(L\vert_{\SBs(L)}.C) \geq 0$ by hypothesis.

If $C \not\subseteq \SBs(L)$, then $C \cap \SBs(L)$ is a finite set.
Thus $\SBs(L\vert_C)$ is a finite set since 
$\SBs(L\vert_C) \subseteq C \cap \SBs(L)$ by Lemma~\ref{lem:base-locus-pullback}.
But then $\SBs(L\vert_C) = \emptyset$ \cite[Corollary~1.14]{Fujita}.
So $L\vert_C$ is semiample and thus we may replace $L$ with a
multiple and assume $L\vert_C$ is generated
by global sections. Thus $L\vert_C$ is the pullback of a very ample line bundle
on some projective space. So we have
$(L.C) = (L\vert_C.C) \geq 0$ by \cite[p.~303, Proposition~1]{kleiman}.
This shows that $L$ is nef.
\end{proof}

\begin{remark}
The above theorem makes the nef hypothesis unnecessary in
\cite[Theorem~1.5]{Birkar}.
We have $\SBs(L) \subseteq \AB(L)$ by \cite[Example~1.16]{ELMMP},
\cite[Section~2]{CMM}, \cite[Section~2.7]{Birkar}.
If
$L\vert_Z$ (with $Z_{\red} = \AB(L)$) is semiample, then 
$L\vert_{\SBs(L)}$ is semiample and hence nef.
So by Theorem~\ref{thm:nef-on-stable-base-locus}, $L$ is nef.
\end{remark}

We now show that Theorem~\ref{thm:nef-on-stable-base-locus} also
holds for arithmetically nef. Note that all properties discussed
in the proof are stable under a base change $\Spec R_\beta \to \Spec R_\alpha$,
so for simplicity we simply speak of choosing $\alpha$ such 
that a property holds over $\Spec R_\alpha$.

\begin{proof}[Proof of Theorem~\ref{thm:arith-nef-on-stable-base-locus}]
Let $Z = \SBs(L)$.
Suppose $L$ is arithmetically nef. Then $L\vert_{Z}$
is arithmetically nef by Lemma~\ref{lem:arith-nef-surjective}.

Now assume $L\vert_{Z}$ is arithmetically nef. 
Replacing $L$ with a positive power, we may assume $Z = \Bs(L)$.
By Definition~\ref{def:base-locus}
 of base locus, there exists $m > 0$ and a right exact sequence
\begin{equation}\label{eq:right-exact}
\oplus_{i=1}^m \OO_X \to L \to F \to 0
\end{equation}
with $\Supp (F) = Z$. 

As described in Section~\ref{sec:arith-nef}, we can choose a thickening
with $X_\alpha \to \Spec R_\alpha$ proper and $L_\alpha$ invertible.
We can also thicken the sequence \eqref{eq:right-exact} to
the right exact 
\begin{equation}\label{eq:right-exact-thick}
\oplus_{i=1}^m \OO_{X_\alpha} \to L_\alpha \to F_\alpha \to 0
\end{equation}
by \cite[$\mathrm{IV}_3$, 8.5.6]{ega}. Let $Z_\alpha = \Supp(F_\alpha)$
and $i: X \to X_\alpha$
be the morphism induced by the base change $\Spec k \to \Spec R_\alpha$.
Then $F = i^*F_\alpha$ and $Z = \Supp(F) = i^{-1}Z_\alpha$
\cite[Tag 056J]{stacks-project}. By hypothesis and the definition
of arithmetically nef, we can also choose $\alpha$ so that
$L_\alpha\vert_{Z_\alpha}$ is nef. That is, if $s \in \Spec R_\alpha$
is a closed point, and $Z_s, X_s$ are the fibers (respectively) of 
$Z_\alpha,X_\alpha$ over $s$, then $L_\alpha\vert_{Z_s}$ is nef.

Let $i_s: X_s \to X_\alpha$ be the natural closed immersion. We can pullback
\eqref{eq:right-exact-thick} to the fiber $X_s$. Write $L_s = i_s^*L_\alpha$
and $F_s = i_s^*F_\alpha$. Then $\Supp(F_s) = i_s^{-1}Z_\alpha = Z_s$.
By definition of (stable) base locus, $\SBs(L_s) \subseteq \Bs(L_s) \subseteq Z_s$. 
Since $L_s\vert_{Z_s} \cong L_\alpha\vert_{Z_s}$ is nef, 
we have $L_s\vert_{\SBs(L_s)}$ nef. Hence $L_s$ is nef by 
Theorem~\ref{thm:nef-on-stable-base-locus}. 
Since this holds for all closed $s \in \Spec R_\alpha$,
we have that $L_\alpha$ is nef by definition, and hence $L$ is arithmetically
nef by definition.
\end{proof}

Now combining Corollary~\ref{cor:arith-nef-on-curves} and 
Theorem~\ref{thm:arith-nef-on-stable-base-locus} we immediately have the following.

\begin{corollary}\label{cor:small-stable-base-locus}
Let $X$ be a proper scheme over a field $k$ with line bundle $L$.
If $L$ is nef and $\dim \SBs(L) \leq 1$, then $L$ is 
arithmetically nef. 

In particular, if $X$ is an integral surface and $L$ is nef and $L^m$ is effective
for some $m > 0$, then $L$ is arithmetically nef. \qed
\end{corollary}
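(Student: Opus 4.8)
The plan is to obtain both assertions essentially for free from the three preceding results. For the first assertion, write $Z=\SBs(L)$ and run the chain: since $L$ is nef, Theorem~\ref{thm:nef-on-stable-base-locus} gives that $L\vert_{Z}$ is nef; since $Z$ is a proper scheme over $k$ with $\dim Z\le 1$, Corollary~\ref{cor:arith-nef-on-curves} upgrades this to the statement that $L\vert_{Z}$ is arithmetically nef; and then Theorem~\ref{thm:arith-nef-on-stable-base-locus} yields that $L$ is arithmetically nef. So the first part is just a matter of checking that the hypotheses of these three statements match up, which they do.

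For the ``in particular'' clause, the work is to verify $\dim\SBs(L)\le 1$ and then invoke the first part (note that $L$ is still assumed nef). First I would record that $\SBs(L)\subseteq\Bs(L^m)$: by Proposition~\ref{prop:stable-base-locus-exists} one has $\SBs(L)=\Bs(L^{m_0})$ for some $m_0>0$ with $\Bs(L^{m_0})\subseteq\Bs(L^n)$ for every $n>0$, in particular for $n=m$. Next, effectivity of $L^m$ provides a nonzero $s\in H^0(X,L^m)$; trivializing $L^m$ over a dense open subset, $s$ restricts to a nonzero regular function on the integral scheme $X$, so its vanishing scheme $V(s)$ is a proper closed subscheme, whence $\dim V(s)\le\dim X-1=1$. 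By the description \eqref{eq:usual-base-locus}, $\Bs(L^m)\subseteq V(s)$, so $\dim\SBs(L)\le\dim\Bs(L^m)\le 1$, and the first assertion applies.

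There is no real obstacle here: the corollary is a repackaging of Theorem~\ref{thm:nef-on-stable-base-locus}, Corollary~\ref{cor:arith-nef-on-curves}, and Theorem~\ref{thm:arith-nef-on-stable-base-locus}. The only place where the hypotheses genuinely bite is the ``in particular'' clause, where integrality of $X$ is exactly what forces a nonzero section of $L^m$ to have a proper vanishing locus: on a reducible or non-reduced surface a section of $L^m$ could vanish identically along a two-dimensional component, and then effectivity of $L^m$ would say nothing about $\dim\SBs(L)$.
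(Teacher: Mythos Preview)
Your proof is correct and matches the paper's approach: the paper simply says the corollary follows ``immediately'' by combining Corollary~\ref{cor:arith-nef-on-curves} and Theorem~\ref{thm:arith-nef-on-stable-base-locus}, and leaves the ``in particular'' clause to the reader, which you have filled in correctly. The only superfluous step is invoking Theorem~\ref{thm:nef-on-stable-base-locus} for the forward implication $L$ nef $\Rightarrow L\vert_{\SBs(L)}$ nef, since the restriction of a nef line bundle to any closed subscheme is trivially nef; but this does no harm.
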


\section{Counterexamples}\label{sec:counterexamples}

In this section, we consider examples of line bundles $L$ which
are nef, but not arithmetically nef.
In \cite[Section~8]{Langer1}, Langer gave an example of a nef, but not arithmetically
nef, line bundle on a smooth projective scheme over a field of characteristic $0$.
As he works over $\ZZ[\frac{1}{N}]$ for some natural number $N$, it
is clear that his example is not arithmetically nef.


We now consider a characteristic $p > 0$ example
and verify that the line bundle is nef, but not arithmetically nef.

\begin{example}\label{ex:char-p}
In \cite[Example~5.3]{Langer2}, pulling together
the results of a few authors, Langer gives an example of a smooth,
projective morphism $\pi: X \to S$ with $S = \mathbb{A}^1_k$ and 
$k = \overline{\mathbb{F}}_2$. There exists a line bundle $L$ on $X$
such that $L$ is nef on the generic fiber, but not on any closed fiber.
This will also be a counterexample to arithmetic nefness as follows.

Let $\{ k_\alpha \}$ be the directed system of finite subfields of $k$
and $R_\alpha = k_\alpha[t]$. By Remark~\ref{rem:char-p}, the $k_\alpha$
are all the finite type $\ZZ$-subalgebras of $k$.
 Since all schemes are finite type
over $k$, we have arithmetic thickenings $X_\alpha, L_\alpha$
over $k_\alpha$. For $\alpha$ sufficiently large,
$\pi_\alpha: X_\alpha \to S_\alpha = \Spec R_\alpha$ is smooth and projective
\cite[$\mathrm{IV}_{4,3}$, 17.7.8, 8.10.5]{ega}.

Since $k$ is algebraic over $k_\alpha$, we have $k[t]$ integral over $k_\alpha[t]$.
Thus $\Spec k[t] \to \Spec k_\alpha[t]$ is surjective
\cite[{Tag 00GQ}]{stacks-project}
and closed points map to closed points \cite[{Tag 00GT}]{stacks-project}.
Thus $L_\alpha$ is nef on the generic fiber of $\pi_\alpha$,
but not nef on any closed fiber \cite[Lemma~2.18]{keeler}. 
The $R_\alpha$ are part of the directed system of finitely generated $\ZZ$-subalgebras
of $K = k(t)$. By Corollary~\ref{cor:non-arith-nef},
 $L$ restricted to the generic fiber of $\pi$ is an example
of a nef, but not arithmetically nef, line bundle over a field $K$
of positive characteristic.
\end{example}

\section*{Acknowledgements}
We thank D.~Arapura, A.~Langer, and D.~Litt for discussions on this topic.


\begin{thebibliography}{EGA}

  

\bibitem[AK2004]{ArapuraAppendix} D.~Arapura,
\emph{Frobenius amplitude and strong vanishing theorems for vector bundles}.
With an appendix by Dennis S. Keeler.
Duke Math. J. \textbf{121} (2004), no. 2, 231--267. 

\bibitem[Bir2017]{Birkar} C.~Birkar,
\emph{The augmented base locus of real divisors over arbitrary fields}.
Math. Ann. \textbf{368} (2017), no. 3--4, 905--921. 

\bibitem[CMM2014]{CMM}
P.~Cascini, J.~McKernan,  M. Musta\c t\u a,
\emph{The augmented base locus in positive characteristic},
Proc. Edinb. Math. Soc. (2) \textbf{57} (2014), no. 1, 79--87. 

\bibitem[deJ1996]{deJong}
A.~J. de~Jong, \emph{Smoothness, semi-stability and alterations}, {Inst. Hautes
  \'Etudes Sci. Publ. Math.} (1996), no.~83, 51--93.
 
\bibitem[ELMMP]{ELMMP}  
L. Ein, R. Lazarsfeld, M. Musta\c t\u a, M. Nakamaye and M. Popa, 
\emph{Asymptotic invariants of base loci}, Ann. Inst. Fourier (Grenoble) \textbf{56}
 (2006), 1701--1734. 

\bibitem[Fuj1983]{Fujita}
T. Fujita, \emph{Semipositive line bundles}, {J. Fac. Sci. Univ. Tokyo Sect.
  IA Math.} \textbf{30} (1983), no.~2, 353--378. 

\bibitem[GW2010]{Gortz} U.~G\"ortz, T.~ Wedhorn,
\emph{Algebraic geometry I.
Schemes with examples and exercises}, 
Advanced Lectures in Mathematics. Vieweg + Teubner, Wiesbaden, 2010.
  
\bibitem[EGA]{ega} 
A.~Grothendieck, \emph{\'{E}l\'ements de g\'eom\'etrie alg\'ebrique}, {Inst.
  Hautes \'Etudes Sci. Publ. Math.} (1961, 1963, 1966, 1967), no.~11, 14, 28, 32.
  

\bibitem[Har1977]{hartshorne2}
R. Hartshorne, \emph{Algebraic geometry}, Graduate Texts in Math., no.~52,
  Springer-Verlag, New York, 1977.

\bibitem[Ill2002]{Illusie}
L. Illusie, \emph{Frobenius and Hodge degeneracy}
 Introduction to Hodge theory, 99--150,
SMF/AMS Texts and Monographs, 8. 
American Mathematical Society, Providence, RI; Soci\'et\'e Math\'ematique de France, 
Paris, 2002.
 
 
\bibitem[Kee2003]{keeler}
D.~S. Keeler, \emph{Ample filters of invertible sheaves}, J. Algebra \textbf{259}~(1) (2003)
  243--283.
  
\bibitem[Kee2018]{keeler-cor}
\bysame,
\emph{Corrigendum to ``Ample filters of invertible sheaves'' [J. Algebra 259 (1) (2003) 243--283]},
J. Algebra \textbf{507}
(2018) 592--598.  
 
\bibitem[Kle1966]{kleiman}
S.~L. Kleiman, \emph{Toward a numerical theory of ampleness}, Ann. of Math.
  (2) \textbf{84} (1966), 293--344.
  
\bibitem[Kol1996]{kollar}
J.~Koll\'ar, \emph{Rational curves on algebraic varieties},
Ergebnisse der Mathematik und ihrer Grenzgebiete. 3. Folge. 
A Series of Modern Surveys in Mathematics, \textbf{32}, Springer-Verlag, Berlin, 1996.

\bibitem[Lan2013]{Langer2}
A. Langer, 
\emph{On positivity and semistability of vector bundles in 
finite and mixed characteristics}.
J. Ramanujan Math. Soc. \textbf{28A} (2013), 287--309. 

\bibitem[Lan2015]{Langer1}
\bysame,
\emph{Generic positivity and foliations in positive characteristic},
Adv. Math. \textbf{277} (2015), 1--23.


  
\bibitem[Laz2004]{PAG}
R. Lazarsfeld, \emph{Positivity in algebraic geometry. {I}}, Springer-Verlag, Berlin,
  2004.
  
\bibitem[Les2014]{Les} 
J.~Lesieutre,
\emph{The diminished base locus is not always closed},
Compos. Math. \textbf{150} (2014), no. 10, 1729--1741. 

\bibitem[Mus2013]{Mustata}
M.~Musta\c t\u a,
\emph{The non-nef locus in positive characteristic},
 A celebration of algebraic geometry, 535--551,
Clay Math. Proc., \textbf{18}, Amer. Math. Soc., Providence, RI, 2013.
  
\bibitem[SP2018]{stacks-project}
The {Stacks Project Authors}, \textit{Stacks Project},
  \url{http://stacks.math.columbia.edu} (2018).
 
\end{thebibliography}

\providecommand{\bysame}{\leavevmode\hbox to3em{\hrulefill}\thinspace}
\providecommand{\MR}{\relax\ifhmode\unskip\space\fi MR }
\providecommand{\MRhref}[2]{%
  \href{http://www.ams.org/mathscinet-getitem?mr=#1}{#2}
}
\providecommand{\href}[2]{#2}

\expandafter\ifx\csname url\endcsname\relax
  \def\url#1{\texttt{#1}}\fi
\expandafter\ifx\csname urlprefix\endcsname\relax\def\urlprefix{URL }\fi
\expandafter\ifx\csname href\endcsname\relax
  \def\href#1#2{#2} \def\path#1{#1}\fi

\end{document}